\newcommand{\defi}[1]{{\upshape\sffamily #1}}
\renewcommand{\a}{\alpha}
\renewcommand{\b}{\beta}
\newcommand{\D}{\mathcal{D}}
\newcommand{\bw}{\bigwedge}
\renewcommand{\det}{\textrm{det}}
\renewcommand{\ll}{\lambda}
\newcommand{\oo}{\otimes}
\renewcommand{\P}{\mathcal{P}}
\renewcommand{\t}{\underline{t}}
\newcommand{\x}{\underline{x}}
\newcommand{\y}{\underline{y}}
\newcommand{\z}{\underline{z}}
\newcommand{\X}{\mathcal{X}}
\newcommand{\Z}{\mathcal{Z}}
\newcommand{\Ann}{\operatorname{Ann}}
\newcommand{\Ext}{\operatorname{Ext}}
\newcommand{\GL}{\operatorname{GL}}
\newcommand{\Hom}{\operatorname{Hom}}
\newcommand{\Sym}{\operatorname{Sym}}
\newcommand{\Tor}{\operatorname{Tor}}
\newcommand{\coker}{\operatorname{coker}}
\renewcommand{\det}{\operatorname{det}}
\renewcommand{\ker}{\operatorname{ker}}
\newcommand{\reg}{\operatorname{reg}}
\newcommand{\bb}[1]{\mathbb{#1}}
\renewcommand{\rm}[1]{\textrm{#1}}
\newcommand{\mc}[1]{\mathcal{#1}}
\newcommand{\mf}[1]{\mathfrak{#1}}
\newcommand{\ul}[1]{\underline{#1}}
\def\lra{\longrightarrow}
\def\lla{\longleftarrow}
\def\llra{\longleftrightarrow}
\newtheorem{theorem}{Theorem}[section]
\newtheorem*{theorem*}{Theorem}
\newtheorem*{problem*}{Problem}
\newtheorem{lemma}[theorem]{Lemma}
\newtheorem*{corollary*}{Corollary}
\newtheorem*{main-thm*}{Main Theorem}
\newtheorem*{linear-resolutions*}{Theorem on Linear Resolutions}
\newtheorem*{regularity-powers*}{Theorem on Regularity}
\newtheorem*{injectivity-Ext*}{Theorem on Injectivity of Maps of Ext Modules}
\newtheorem*{Kodaira*}{Kodaira Vanishing for Determinantal Thickenings}
\theoremstyle{definition}
\newtheorem{definition}[theorem]{Definition}
\newtheorem*{definition*}{Definition}
\newtheorem{example}[theorem]{Example}
\newtheorem{problem}[theorem]{Problem}
\theoremstyle{remark}
\newtheorem*{remark*}{Remark}
\numberwithin{equation}{section}
\begin{document}

\title{Homological invariants of determinantal thickenings}

\author{Claudiu Raicu}
\address{Department of Mathematics, University of Notre Dame, 255 Hurley, Notre Dame, IN 46556\newline
\indent Institute of Mathematics ``Simion Stoilow'' of the Romanian Academy}
\email{craicu@nd.edu}

\subjclass[2010]{Primary 13D07, 14M12, 13D45}

\date{\today}

\keywords{Determinantal varieties, local cohomology, syzygies, Ext and Tor groups, thickenings}

\begin{abstract} 
 The study of homological invariants such as Tor, Ext and local cohomology modules constitutes an important direction in commutative algebra. Explicit descriptions of these invariants are notoriously difficult to find and often involve combining an array of techniques from other fields of mathematics. In recent years tools from algebraic geometry and representation theory have been successfully employed in order to shed some light on the structure of homological invariants associated with determinantal rings. The goal of this notes is to survey some of these results, focusing on examples in an attempt to clarify some of the more technical statements.
\end{abstract}

\maketitle

\section{Introduction}\label{sec:intro}

Consider a polynomial ring $S = \bb{C}[X_1,\cdots,X_N]$ and a group homomorphism $G\lra \GL_N(\bb{C})$ giving rise to an action of $G$ on $S$ by linear changes of coordinates. It is natural to study the following problem:

\begin{problem}\label{prob:classification}
 Classify the homogeneous ideals $I\subseteq S$ which are invariant under the action of $G$.
\end{problem}

The difficulty of this problem is inversely correlated with the size of the group $G$: when $G=\{1\}$ is the trivial group we get all the ideals in $S$, while for $G=\GL_N(\bb{C})$ the only invariant ideals are the powers $\mf{m}^d$, $d\geq 0$, of the maximal homogeneous ideal $\mf{m} = (X_1,\cdots,X_N)$. An important intermediate case arises by taking $G = (\bb{C}^*)^N$, thought of as the subgroup of diagonal matrices in $\GL_N(\bb{C})$, in which case the invariant ideals are precisely the ones generated by monomials.

In this article we are concerned with the situation when $N = m\cdot n$ for some positive integers $m\geq n$, in which case we write $S = \bb{C}[X_{11},\cdots,X_{mn}]$, we think of the variables as the entries of the generic matrix $X=(X_{ij})$, and we consider $G = \GL_m(\bb{C}) \times \GL_n(\bb{C})$ acting via row and column operations on $X$. In this case Problem~\ref{prob:classification} has been completely resolved in \cite{deconcini-eisenbud-procesi}, and we recall its solution in Section~\ref{sec:class-GL-invariant}.

Once the classification Problem~\ref{prob:classification} is resolved, or perhaps after we restrict our focus to a subclass of $G$-invariant ideals that is of interest, it is natural to study homological invariants associated with these ideals, and understand the symmetries that these invariants inherit from the action of $G$. Some of the most fundamental homological invariants are listed in the following problem.

\begin{problem}\label{prob:invariants}
 Given a $G$-invariant ideal $I\subset S$, describe the $G$-module structure of
 \begin{itemize}
  \item The syzygy modules $\Tor_{\bullet}^S(I,\bb{C})$.
  \item The $\Ext$ modules $\Ext^{\bullet}_S(S/I,S)$.
  \item The local cohomology modules $H_I^{\bullet}(S)$.
 \end{itemize}
\end{problem}

Knowledge of the basic invariants listed above allows us to compute further invariants such as depth and projective dimension, Castelnuovo--Mumford regularity, Hilbert polynomials etc. In the case when $G = \GL_m(\bb{C}) \times \GL_n(\bb{C})$ the $\Ext$ modules have been computed in \cite{raicu-regularity}, while the local cohomology modules have been described in \cite{raicu-weyman}. As far as syzygy modules are concerned, Problem~\ref{prob:invariants} is still unresolved, the only class of examples for which it has been answered being the primary $G$-invariant ideals which are generated by a single irreducible representation of $G$ \cite{raicu-weyman-syzygies}.

Despite the elementary nature of Problems~\ref{prob:classification} and~\ref{prob:invariants}, their resolution depends on some deep and beautiful (both commutative and noncommutative) mathematics. For spaces of matrices the representation theory of the general linear group plays an essential role at every stage. In addition to that, the classification problem is resolved using the theory of algebras with straightening law. Moreover, as far as the homological invariants are concerned: 
\begin{itemize}
 \item The calculation of $\Ext$ modules uses Grothendieck duality, as well as the Borel--Weil--Bott theorem describing the cohomology of simple homogeneous vector bundles on Grassmannian varieties.
 \item The local cohomology modules are best understood through their $\D$-module structure, i.e. their structure as modules over the Weyl algebra of differential operators.
 \item The structure of the syzygy modules depends heavily on the representation theory of the general linear Lie superalgebras.
\end{itemize}

Without going into the details of the tools involved in the proofs, we summarize the existing results along with a number of examples that the reader is encouraged to work out in detail. The organization of the paper is as follows. In Section~\ref{sec:GL_N} we explain the solution to Problems~\ref{prob:classification} and~\ref{prob:invariants} in the case when the group $G$ is the full group of linear automorphisms of $S$, which is equivalent to considering the action by (row and) column operations on matrices with one row, and as such is a special case of the analysis performed in the remaining part of the paper. In Section~\ref{sec:class-GL-invariant} we recall the solution to Problem~\ref{prob:classification} in the case when $G = \GL_m(\bb{C}) \times \GL_n(\bb{C})$ acts on $S=\bb{C}[X_{ij}]$, following \cite{deconcini-eisenbud-procesi}. In Sections~\ref{sec:Ext},~\ref{sec:H_I},~\ref{sec:Tor} we study Problem~\ref{prob:invariants} for each of the three fundamental homological invariants, following \cites{raicu-regularity,raicu-weyman,raicu-weyman-witt,raicu-weyman-syzygies}. We end with Section~\ref{sec:open} where we give a short list of open problems.

\section{Ideals invariant under all coordinate changes}\label{sec:GL_N}

Let $S=\bb{C}[X_1,\cdots,X_N]$ and consider the action of $G=\GL_N(\bb{C})$ by linear changes of coordinates. If we regard $X_1,\cdots,X_N$ as the entries of the generic $N\times 1$ matrix, then the results of this section are special cases ($m=N$ and $n=1$) of the results in the rest of the paper. They should be regarded as a warm-up for things to come. If we write
\[\mf{m} = \langle X_1,\cdots,X_N\rangle\]
for the maximal homogeneous ideal of $S$ then the answer to Problem~\ref{prob:classification} is given by the following.

\begin{theorem}\label{thm:invariant-GLN}
 The $\GL_N(\bb{C})$-invariant ideals in $S$ are $\mf{m}^d$, for $d\geq 0$.
\end{theorem}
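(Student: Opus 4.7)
The plan is to use the fact that $S$ decomposes as a $\GL_N(\mathbb{C})$-module into its graded pieces $S_d = \Sym^d(\bb{C}^N)$, each of which is an irreducible representation. Granting this, any $G$-invariant subspace of $S$ must be a sum of isotypic components of $S$; since the pieces $S_d$ are pairwise non-isomorphic (they have different highest weights, or more simply they live in different degrees), any $\GL_N(\bb{C})$-invariant graded subspace is of the form $\bigoplus_{d\in A} S_d$ for some $A\subseteq \bb{Z}_{\geq 0}$. Note that a $\GL_N(\bb{C})$-invariant ideal is automatically homogeneous, since the scalar matrices in $\GL_N(\bb{C})$ act on $S_d$ by $t^d$ and invariance under this torus forces graded decomposition.

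First I would verify that $\Sym^d(\bb{C}^N)$ is irreducible as a $\GL_N(\bb{C})$-representation. This is a standard fact: it is the highest weight representation with highest weight $(d,0,\ldots,0)$, and the highest weight vector $X_1^d$ generates the whole space under the action. Alternatively, one can observe that $\Sym^d(\bb{C}^N)$ is spanned by the $\GL_N(\bb{C})$-orbit of any nonzero element, using the fact that $\GL_N(\bb{C})$ acts transitively on nonzero vectors of $\bb{C}^N$ and by diagonal change of variables one can convert any monomial to $X_1^d$.

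Next I would show that the set $A$ corresponding to an ideal $I$ must be upward closed. Suppose $d \in A$, so $S_d \subseteq I$. Then for any $e\geq 0$, the product $S_d \cdot S_e \subseteq I \cap S_{d+e}$. Since $X_1^d \cdot X_1^e = X_1^{d+e}$ is a nonzero element of $S_{d+e} \cap I$, and $S_{d+e}$ is an irreducible $\GL_N(\bb{C})$-module whose intersection with $I$ is $\GL_N(\bb{C})$-invariant, we conclude $S_{d+e}\subseteq I$, i.e.\ $d+e\in A$.

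Combining these observations, if $I\neq 0$ and $d=\min A$, then $A = \{d, d+1, d+2,\ldots\}$ and hence $I = \bigoplus_{e\geq d} S_e = \mf{m}^d$. For $I = 0$ we interpret this as $d = \infty$; for $I = S$ we have $d = 0$. There is no real obstacle in this argument — the only nontrivial input is the irreducibility of $\Sym^d(\bb{C}^N)$, which is a classical result about polynomial representations of the general linear group.
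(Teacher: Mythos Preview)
Your argument is correct and follows essentially the same route as the paper's proof: decompose $S=\bigoplus_{d\geq 0}\Sym^d(\bb{C}^N)$ into irreducibles, observe that any invariant ideal is a sum of some of these pieces, and use the ideal condition to conclude $I=\mf{m}^d$ for $d$ minimal. You are slightly more explicit than the paper about irreducibility and about why the set of degrees is upward closed (and you add the nice remark that invariance under scalars forces homogeneity, which the paper takes as a standing hypothesis), but the substance is the same.
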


\begin{proof}
 The polynomial ring $S$ decomposes as a direct sum of irreducible $\GL_N(\bb{C})$-representations 
 \[ S = \bigoplus_{d\geq 0} \Sym^d(\bb{C}^N),\]
 and $\mf{m}^d$ is precisely the ideal generated by $\Sym^d(\bb{C}^N)$, which is therefore invariant. Any $\GL_N(\bb{C})$-invariant ideal is generated by a (finite dimensional) subrepresentation of $S$, which necessarily has the form
 \[ \Sym^{d_1}(\bb{C}^N) \oplus \cdots \oplus \Sym^{d_r}(\bb{C}^N).\]
 This means that $I = \mf{m}^{d_1} + \cdots + \mf{m}^{d_r}$, which implies $I = \mf{m}^d$ for $d=\min\{d_1,\cdots,d_r\}$.
\end{proof}

Having classified the invariant ideals, we now turn to the homological invariants from Problem~\ref{prob:invariants}. It is useful to note (see \cite[Exercise~A.2.17]{eisenbud-CA}) that $\mf{m}^d$ ($d>0$) is the ideal generated by the maximal minors of the $(d+N-1)\times d$ matrix
\[
\begin{bmatrix}
 X_1 & X_2 & X_3 & \cdots & X_N & 0 & 0 & \cdots & 0 \\
 0 & X_1 & X_2 & \cdots & X_{N-1} & X_N & 0 & \cdots & 0 \\
 & & \ddots & \ddots & & & \ddots & &  \\
 & & & \ddots & \ddots & & & \ddots & 0 \\
 0 & \cdots & \cdots & 0 & X_1 & X_2 & \cdots & \cdots & X_N
\end{bmatrix}
\]
Since the ideal $\mf{m}^d$ has grade $N = (N+d-1) - d + 1$, we can apply \cite[Theorem~A.2.60]{eisenbud-syzygies} to conclude that it has a linear resolution given by the Eagon--Northcott complex. To describe the $\GL_N(\bb{C})$-structure of the syzygy modules, we first introduce some notation.

A \defi{partition} $\ul{x}=(x_1,x_2,\cdots)$ is a finite collection of non-negative integers, with $x_1\geq x_2\geq\cdots$. We call each $x_i$ a \defi{part} of $\ul{x}$, and define the \defi{size} of $\ul{x}$ to be $|\ul{x}|=x_1+x_2+\cdots$. Most of the time we suppress the parts of size zero from the notation, for instance the partitions $(4,2,1,0,0)$ and $(4,2,1)$ are considered to be the same; their size is $7=4+2+1$. When $\x$ has repeated parts, we often use the abbreviation $(b^a)$ for the sequence $(b,b,\cdots,b)$ of length $a$. For instance, $(4,4,4,3,3,3,3,3,2,1)$ would be abbreviated as $(4^3,3^5,2,1)$. We denote by $\mc{P}_n$ the collection of partitions with at most $n$ non-zero parts. It is often convenient to identify a partition $\x$ with the associated Young diagram:
\[\Yvcentermath1 \x = (4,2,1,0,0) \quad\quad\llra\quad\quad \yng(4,2,1)\]
Of particular interest in this section are the \defi{hook partitions}, i.e. those $\x$ for which $x_2\leq 1$, or equivalently $\x = (a,1^b)$ for some $a,b\geq 0$. The name is illustrative of the shape of the associated Young diagram, as seen for instance in the following example of a hook partition:
\[\Yvcentermath1 \x = (4,1,1) \quad\quad\llra\quad\quad \yng(4,1,1)\]

Each partition $\x\in\mc{P}_N$ determines an irreducible $\GL_N(\bb{C})$-representation, denoted $\bb{S}_{\x}\bb{C}^N$ (one usually refers to $\bb{S}_{\x}$ as the \defi{Schur functor} associated to the partition $\x$): our conventions are such that when $\x=(d)$ we have that $\bb{S}_{\x}\bb{C}^N=\Sym^d\bb{C}^N$ is a symmetric power, while for $\x=(1^k)$ we have that $\bb{S}_{\x}\bb{C}^N=\bw^k\bb{C}^N$ is an exterior power. The syzygies of powers of the maximal homogeneous ideals can then be described in terms of hook partitions as follows (unless specified, all tensor products are considered over the base field $\bb{C}$).

\begin{theorem}[{\cite[(1.a.10)]{green2}, \cite[Cor.~3.2]{buchsbaum-eisenbud}}]\label{thm:syz-GLN}
 We have $\GL_N(\bb{C})$-equivariant isomorphisms
 \[\Tor_p^S(\mf{m}^d,\bb{C})_{d+p} \simeq \bb{S}_{d,1^p}\bb{C}^N,\mbox{ for }p=0,\cdots,N-1,\]
 and $\Tor_p^S(\mf{m}^d,\bb{C})_q = 0$ for all other values of $p,q$. Equivalently, $\mf{m}^d$ has a linear minimal free resolution
 \[0\lla \mf{m}^d \lla \bb{S}_{d}\bb{C}^N\oo S(-d) \lla \cdots \lla \bb{S}_{d,1^p}\bb{C}^N \oo S(-d-p) \lla \cdots \lla \bb{S}_{d,1^{N-1}}\bb{C}^N \oo S(-d-N+1) \lla 0\]
\end{theorem}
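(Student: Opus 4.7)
The plan is to compute $\Tor_{\bullet}^S(\mf{m}^d,\bb{C})$ directly from the Koszul resolution of $\bb{C}$, exploiting the fact that this resolution is $\GL_N(\bb{C})$-equivariant. Setting $V=\bb{C}^N$, I would let $K_{\bullet}$ denote the Koszul complex on the regular sequence $X_1,\ldots,X_N$, so that $K_p = \bw^p V \oo S(-p)$ is a minimal free resolution of $\bb{C}$. Tensoring with $\mf{m}^d$ yields a complex whose homology is $\Tor_{\bullet}^S(\mf{m}^d,\bb{C})$, and the key observation is that in each total degree $n$ this complex is obtained from the degree-$n$ strand of the standard Koszul complex $K_{\bullet} \oo S$ by discarding the positions at which $(\mf{m}^d)_{n-p}$ vanishes.

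Concretely, $(K_p \oo_S \mf{m}^d)_n = \bw^p V \oo (\mf{m}^d)_{n-p}$ is zero for $p > n-d$ and coincides with $\bw^p V \oo \Sym^{n-p} V$ for $p \leq n-d$. Since the untruncated complex is exact in each positive total degree, the homology of the truncation is concentrated at the single boundary position $p = n-d$, where the incoming differential has been killed by the truncation. This already forces $\Tor_p^S(\mf{m}^d,\bb{C})_n = 0$ unless $n = d+p$, and identifies the remaining Tor group as the kernel of the Koszul differential $\bw^p V \oo \Sym^d V \to \bw^{p-1} V \oo \Sym^{d+1} V$.

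The remaining task is to pin down this kernel as a $\GL_N(\bb{C})$-representation. Pieri's rule supplies the multiplicity-free decompositions
\[
\bw^p V \oo \Sym^d V \cong \bb{S}_{d+1,1^{p-1}}V \oplus \bb{S}_{d,1^p}V, \qquad \bw^{p-1} V \oo \Sym^{d+1} V \cong \bb{S}_{d+2,1^{p-2}}V \oplus \bb{S}_{d+1,1^{p-1}}V,
\]
so by Schur's lemma the equivariant Koszul differential can act nontrivially only on the common summand $\bb{S}_{d+1,1^{p-1}}V$; exactness of the standard Koszul complex then forces that component to be an isomorphism. Hence the kernel is precisely $\bb{S}_{d,1^p}V$, proving the formula, and the vanishing $\bb{S}_{d,1^p}\bb{C}^N = 0$ for $p \geq N$ automatically truncates the resolution at length $N-1$.

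The main point that requires attention is the Schur-lemma identification in the last step, which relies on the multiplicity-freeness of the Pieri decompositions in source and target; once those are written down the conclusion is immediate. An alternative route would be the Eagon--Northcott resolution of $\mf{m}^d$ suggested by the displayed catalecticant matrix, which yields the linearity and ranks for free, but its terms are not manifestly $\GL_N(\bb{C})$-equivariant, so extracting the Schur-functor content from it would require a separate plethysm computation that the Koszul approach sidesteps.
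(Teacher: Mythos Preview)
Your proof is correct and self-contained. The paper, however, does not prove this theorem directly: in the paragraph preceding the statement it observes that $\mf{m}^d$ is the ideal of maximal minors of the displayed $(d+N-1)\times d$ catalecticant matrix, checks that the grade is $N$, and invokes the Eagon--Northcott complex to obtain a linear minimal free resolution; for the identification of the $\GL_N(\bb{C})$-structure on the terms it simply cites Green and Buchsbaum--Eisenbud. Your Koszul-plus-Pieri argument is genuinely different: it bypasses the catalecticant presentation entirely, works in a single $\GL_N(\bb{C})$-equivariant setting throughout, and extracts both the linearity and the Schur-functor content in one stroke. The trade-off is exactly the one you flag in your closing remark---Eagon--Northcott gives the shape of the resolution with no representation theory, but leaves the equivariant identification as a separate (and not entirely trivial) step, whereas your approach needs Pieri's rule and a Schur-lemma argument but delivers the full statement at once.

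One minor wording issue: when you write ``the degree-$n$ strand of the standard Koszul complex $K_{\bullet}\otimes S$'' you presumably just mean the degree-$n$ strand of $K_{\bullet}$ itself (tensoring a complex of $S$-modules with $S$ over $S$ does nothing). This does not affect the argument, which is spelled out correctly in the next sentence.
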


In order to describe the graded components of $\Ext$ and local cohomology modules, we need to consider a generalization of partitions where we allow some parts to be negative: we define a \defi{dominant weight} to be an element $\ll\in\bb{Z}^N$ with $\ll_1\geq\cdots\geq\ll_N$. The associated Schur functor $\bb{S}_{\ll}$ has the property that
\[ \bb{S}_{\ll + (1^N)} \bb{C}^N \simeq \bb{S}_{\ll} \bb{C}^N \oo \bw^N \bb{C}^N\mbox{ for all dominant weights }\ll,\]
and in particular
\[ \bb{S}_{(-1^N)} \bb{C}^N\simeq \left(\bw^N \bb{C}^N\right)^{\vee} = \Hom_{\bb{C}}\left(\bw^N \bb{C}^N,\bb{C}\right).\]
More generally, if we let $\ll^{\vee} = (-\ll_N,-\ll_{N-1},\cdots,-\ll_1)$ then we obtain a $\GL_N(\bb{C})$-equivariant isomorphism
\begin{equation}\label{eq:lambda-dual}
 \bb{S}_{\ll^{\vee}}\bb{C}^N \simeq \Hom_{\bb{C}}\left(\bb{S}_{\ll}\bb{C}^N,\bb{C}\right).
\end{equation}
With the above conventions, the $\GL_N(\bb{C})$-equivariant structure on $\Ext$ modules is given by the following.

\begin{theorem}\label{thm:Ext-GLN}
 For $d\geq 0$ we have a graded $\GL_N(\bb{C})$-equivariant isomorphism
 \begin{equation}\label{eq:Ext-GLN}
 \Ext^N_S(S/\mf{m}^d,S) \simeq \bigoplus_{i=0}^{d-1} \bb{S}_{(-1^{N-1},-i-1)}\bb{C}^N,
 \end{equation}
 where $\bb{S}_{(-1^{N-1},-i-1)}\bb{C}^N$ is placed in degree $-i-N$. Moreover, for $j\neq N$ we have $\Ext^j_S(S/\mf{m}^d,S) = 0$.
\end{theorem}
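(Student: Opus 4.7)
The plan is to proceed by induction on $d$. For $d=1$, $S/\mf{m} = \bb{C}$ admits the Koszul complex as a $\GL_N(\bb{C})$-equivariant free resolution; dualizing yields $\Ext^j_S(\bb{C},S) = 0$ for $j \neq N$, and at the top one obtains $\Ext^N_S(\bb{C},S) \simeq \bw^N(\bb{C}^N)^{\vee} = \bb{S}_{(-1^N)}\bb{C}^N$ concentrated in degree $-N$, matching the $i=0$ summand of \eqref{eq:Ext-GLN}.

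For the inductive step, I consider the short exact sequence $0 \to \mf{m}^{d-1}/\mf{m}^d \to S/\mf{m}^d \to S/\mf{m}^{d-1} \to 0$. Since $\mf{m}$ annihilates the leftmost term, it is isomorphic as a graded $S$-module to $\Sym^{d-1}(\bb{C}^N) \oo \bb{C}(-(d-1))$, with $\bb{C} = S/\mf{m}$ placed in degree $d-1$. Combining the base case with tensor-hom adjunction and the shift rule $\Ext^{\bullet}_S(M(a),S) = \Ext^{\bullet}_S(M,S)(-a)$ gives $\Ext^i_S(\mf{m}^{d-1}/\mf{m}^d, S) = 0$ for $i \neq N$, and
\[
\Ext^N_S(\mf{m}^{d-1}/\mf{m}^d, S) \simeq \Sym^{d-1}(\bb{C}^N)^{\vee} \oo \bb{S}_{(-1^N)}\bb{C}^N
\]
concentrated in degree $-N-(d-1)$. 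The Schur-functor identity $\bb{S}_{(0^{N-1},-(d-1))}\bb{C}^N \oo \bb{S}_{(-1^N)}\bb{C}^N \simeq \bb{S}_{(-1^{N-1},-d)}\bb{C}^N$ rewrites this as the $i=d-1$ summand in \eqref{eq:Ext-GLN}.

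The long exact sequence of $\Ext$ attached to the short exact sequence above then finishes the proof. In every degree $i \neq N$ both flanking Ext groups vanish by induction and the computation just performed, so $\Ext^i_S(S/\mf{m}^d,S) = 0$. In degree $i = N$, the long exact sequence collapses to a short exact sequence of $\GL_N(\bb{C})$-representations
\[
0 \to \Ext^N_S(S/\mf{m}^{d-1}, S) \to \Ext^N_S(S/\mf{m}^d, S) \to \Ext^N_S(\mf{m}^{d-1}/\mf{m}^d, S) \to 0,
\]
which splits by semisimplicity of rational $\GL_N(\bb{C})$-representations in characteristic zero; combined with the inductive hypothesis applied to the left-hand term this assembles the direct sum in \eqref{eq:Ext-GLN}.

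There is no deep obstacle; the main technical annoyance is bookkeeping of grading shifts and of Schur-functor conventions for dominant weights with negative entries. A conceptually cleaner alternative runs through graded local duality: since $S/\mf{m}^d$ is $\mf{m}$-torsion, its only non-vanishing local cohomology is $H^0_\mf{m}(S/\mf{m}^d) = S/\mf{m}^d$, and local duality at once gives $\Ext^N_S(S/\mf{m}^d,\omega_S) \simeq (S/\mf{m}^d)^{\vee}$; translating via $\omega_S \simeq S(-N) \oo \bw^N\bb{C}^N$ recovers \eqref{eq:Ext-GLN} without any induction.
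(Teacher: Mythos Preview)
Your inductive proof is correct. The paper, however, takes precisely the ``conceptually cleaner alternative'' you sketch at the end: it applies duality for the graded Artinian algebra $A=S/\mf{m}^d$ to obtain $\Ext^N_S(A,\omega_S)\simeq\Hom_{\bb{C}}(A,\bb{C})$ in one stroke, and then reads off the answer from the decomposition $A=\bigoplus_{i=0}^{d-1}\Sym^i\bb{C}^N$ together with $\omega_S\simeq\bb{S}_{(1^N)}\bb{C}^N\oo S(-N)$. Your main argument trades this appeal to duality for elementary homological algebra (the Koszul resolution plus a long exact sequence and semisimplicity), which is more self-contained but incurs the grading and Schur-functor bookkeeping you acknowledge. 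As a small bonus, the short exact sequence driving your inductive step is exactly the one the paper uses in the proof of Theorem~\ref{thm:loccoh-GLN} to show injectivity of the maps in the direct system computing $H^N_{\mf{m}}(S)$, so your approach already contains that ingredient.
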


\begin{proof}
 We note that $A = S/\mf{m}^d$ is a graded Artinian $\bb{C}$-algebra which is a quotient of the polynomial ring, so duality theory (see \cite[Chapter~21]{eisenbud-CA} or \cite[Section~I.3]{bruns-herzog}) provides canonical isomorphisms
 \[ \Hom_{\bb{C}}(A,\bb{C}) \simeq \omega_A \simeq \Ext^N_S(A,\omega_S),\]
 where $\omega$ denotes the canonical module. Using the fact that
 \[\omega_S \simeq \bb{S}_{(1^N)}\bb{C}^N \oo S(-N)\]
 and $A = \bigoplus_{i=0}^{d-1} \bb{S}_{i} \bb{C}^N$, which in turn implies using (\ref{eq:lambda-dual})
 \[\Hom_{\bb{C}}(A,\bb{C}) \simeq \bigoplus_{i=0}^{d-1} \bb{S}_{(0^{N-1},-i)} \bb{C}^N,\]
 we obtain
 \[ \Ext^N_S(A,S) \simeq \Hom_{\bb{C}}(A,\bb{C}) \oo \bb{S}_{(-1^N)}\bb{C}^N \simeq \bigoplus_{i=0}^{d-1} \bb{S}_{(-1^{N-1},-i-1)}\bb{C}^N\]
 where $\bb{S}_{(-1^N)}\bb{C}^N$ is placed in degree $-N$, which determines the desired grading on the remaining homogeneous components.
\end{proof}

Using the fact that local cohomology is a direct limit of $\Ext$ modules (see \cite[Appendix~4]{eisenbud-CA})
\begin{equation}\label{eq:loccoh=limExt}
 H_I^j(S) = \varinjlim_d \Ext^j_S(S/I^d,S)
\end{equation}
one easily obtains the following.

\begin{theorem}\label{thm:loccoh-GLN}
We have a graded $\GL_N(\bb{C})$-equivariant isomorphism
 \begin{equation}\label{eq:loccoh-GLN}
 H_{\mf{m}}^N(S) \simeq \bigoplus_{i\geq 0} \bb{S}_{(-1^{N-1},-i-1)}\bb{C}^N,
 \end{equation}
 where $\bb{S}_{(-1^{N-1},-i-1)}\bb{C}^N$ is placed in degree $-i-N$. Moreover, for $j\neq N$ we have $H_{\mf{m}}^j(S) = 0$.
\end{theorem}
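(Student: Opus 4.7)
The plan is to combine the direct limit formula (\ref{eq:loccoh=limExt}) with Theorem~\ref{thm:Ext-GLN}. The vanishing statement $H^j_{\mf{m}}(S)=0$ for $j \neq N$ is immediate: each term $\Ext^j_S(S/\mf{m}^d,S)$ in the directed system vanishes by Theorem~\ref{thm:Ext-GLN}, so the direct limit vanishes as well. Thus the real content is the identification of $H^N_{\mf{m}}(S)$.

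For the nontrivial case $j=N$, my approach is to show that the natural transition maps in the direct system
\[\Ext^N_S(S/\mf{m}^d,S) \lra \Ext^N_S(S/\mf{m}^{d+1},S)\]
induced by the surjections $S/\mf{m}^{d+1} \onto S/\mf{m}^d$ are injective, with cokernel precisely the ``new'' irreducible summand $\bb{S}_{(-1^{N-1},-d-1)}\bb{C}^N$ sitting in degree $-N-d$. Granting this, the direct limit identifies with the increasing union $\bigcup_d \Ext^N_S(S/\mf{m}^d,S)$, which by Theorem~\ref{thm:Ext-GLN} matches (\ref{eq:loccoh-GLN}).

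To obtain the injectivity and compute the cokernel, I would apply $\Ext^{\bullet}_S(-,S)$ to the short exact sequence
\[0 \lra \mf{m}^d/\mf{m}^{d+1} \lra S/\mf{m}^{d+1} \lra S/\mf{m}^{d} \lra 0.\]
The module $\mf{m}^d/\mf{m}^{d+1} \simeq \Sym^d\bb{C}^N$ (placed in degree $d$) is annihilated by $\mf{m}$, hence is a direct sum of copies of $\bb{C}=S/\mf{m}$ as an $S$-module. Theorem~\ref{thm:Ext-GLN} with $d=1$ then yields $\Ext^{N-1}_S(\mf{m}^d/\mf{m}^{d+1},S)=0$, which gives the desired injectivity. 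The cokernel $\Ext^N_S(\mf{m}^d/\mf{m}^{d+1},S)$ can be computed in exactly the manner of the proof of Theorem~\ref{thm:Ext-GLN}: Matlis/Grothendieck duality identifies it with $\Hom_{\bb{C}}(\mf{m}^d/\mf{m}^{d+1},\bb{C})\oo \bb{S}_{(-1^N)}\bb{C}^N$ in degree $-N-d$, and using (\ref{eq:lambda-dual}) together with $\Hom_{\bb{C}}(\Sym^d \bb{C}^N,\bb{C})\simeq \bb{S}_{(0^{N-1},-d)}\bb{C}^N$, this simplifies to $\bb{S}_{(-1^{N-1},-d-1)}\bb{C}^N$, exactly the missing summand.

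The main obstacle is the verification that the transition maps are injective; once that is in hand, the calculation of the cokernel is parallel to the Ext computation already carried out. An equivariant alternative, which avoids repeating the duality argument, is to invoke Schur's lemma: since each irreducible $\GL_N(\bb{C})$-representation appearing in $\Ext^N_S(S/\mf{m}^d,S)$ occurs with multiplicity one and shows up in the same graded piece of $\Ext^N_S(S/\mf{m}^{d+1},S)$, the transition map acts by a scalar on each isotypic summand, and the scalar must be nonzero in order to be consistent with the vanishing of $\Ext^{N-1}_S(\mf{m}^d/\mf{m}^{d+1},S)$ derived above.
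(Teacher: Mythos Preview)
Your proof is correct and follows essentially the same approach as the paper: both use the direct limit formula (\ref{eq:loccoh=limExt}) together with Theorem~\ref{thm:Ext-GLN}, reduce to showing injectivity of the transition maps via the long exact sequence associated to $0\to\mf{m}^d/\mf{m}^{d+1}\to S/\mf{m}^{d+1}\to S/\mf{m}^d\to 0$, and deduce $\Ext^{N-1}_S(\mf{m}^d/\mf{m}^{d+1},S)=0$ from the fact that $\mf{m}^d/\mf{m}^{d+1}$ is annihilated by $\mf{m}$. Your additional computation of the cokernel and the Schur's lemma alternative are correct embellishments but not needed, since once injectivity is known the description of the limit follows directly from the description of the terms in Theorem~\ref{thm:Ext-GLN}.
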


\begin{proof} The formula (\ref{eq:loccoh-GLN}) follows from Theorem~\ref{thm:Ext-GLN} and (\ref{eq:loccoh=limExt}) once we show that the maps in the directed system from (\ref{eq:loccoh=limExt}) are injections. Using the long exact sequence obtained by applying $\Hom_S(\bullet,S)$ to the short exact sequence
\[ 0 \lra \mf{m}^d/\mf{m}^{d+1} \lra S/\mf{m}^{d+1} \lra S/\mf{m}^d \lra 0\]
we see that it is sufficient to verify that $\Ext^{N-1}_S(\mf{m}^d/\mf{m}^{d+1},S) = 0$. Since $\mf{m}^d/\mf{m}^{d+1}$ is annihilated by $\mf{m}$ we get in fact that $\Ext^j_S(\mf{m}^d/\mf{m}^{d+1},S) = 0$ for all $j<N$, concluding the proof.
\end{proof}

An alternative way of computing local cohomology is using the \v Cech complex, which yields the more familiar isomorphism
\[H_{\mf{m}}^N(S) \simeq \frac{ S_{X_1\cdots X_N} } {\sum_{i=1}^N S_{X_1\cdots\hat{X_i}\cdots X_N}}.\]
Nevertheless, the $\GL_N(\bb{C})$-action is harder to see from this description, since the \v Cech complex is not $\GL_N(\bb{C})$-equivariant. The advantage of this representation is that it manifestly expresses $H_{\mf{m}}^N(S)$ as a module over the \defi{Weyl algebra} $\D = S\langle \partial_1,\cdots,\partial_N\rangle$, where $\partial_i = \partial/\partial X_i$ is the partial derivative with respect to~$X_i$. Even more remarkable, the $\D$-module $H_{\mf{m}}^N(S)$ is simple, i.e. it has no non-trivial submodules, despite the fact that as an $S$-module it is not even finitely generated! Other ways of regarding $H_{\mf{m}}^N(S)$ are as the injective hull of the residue field $\bb{C}=S/\mf{m}$, or as the graded dual of the polynomial ring $S$, but they will not concern us any further.

\section{The classification of $\GL$-invariant ideals}\label{sec:class-GL-invariant}

Consider positive integers $m\geq n\geq 1$ and let $S=\bb{C}[X_{ij}]$, where $i=1,\cdots,m$ and $j=1,\cdots,n$. We identify $S$ with the symmetric algebra $\Sym(\bb{C}^m \oo \bb{C}^n)$ and consider the group $\GL=\GL_m(\bb{C}) \times \GL_n(\bb{C})$ together with its natural action on $S$. The goal of this section is to recall from \cite{deconcini-eisenbud-procesi} the classification of $\GL$-invariant ideals $I\subseteq S$, which will be used throughout the rest of the paper.

Given our notation and conventions regarding partitions and the associated Young diagrams (see Section~\ref{sec:GL_N}), whenever we refer to a row/column of a partition $\x$, we always mean a row/column of the associated Young diagram. Given a partition $\x$, we can then construct the conjugate partition $\x'$ by transposing the associated Young diagrams: $x'_i$ counts the number of boxes in the $i$-th column of $\x$, e.g. $(4,2,1)'=(3,2,1,1)$.

We define for each $l=1,\cdots,n$ the polynomial 
\begin{equation}\label{eq:detl}
\det_l = \det(X_{ij})_{1\leq i,j\leq l}.
\end{equation}
For $\x\in\P_n$ we define
\begin{equation}\label{eq:detx}
 \det_{\x} = \prod_{i=1}^{x_1} \det_{x'_i},
\end{equation}
and let
\begin{equation}\label{eq:defIx}
I_{\x} = \langle \GL\cdot\det_{\x}\rangle
\end{equation}
be the ideal generated by the $\GL$-orbit of the polynomial $\det_{\x}$. More generally, if $\mc{X}\subseteq \P_n$ we let
\begin{equation}\label{eq:defIX}
 I_{\mc{X}}=\sum_{\ul{x}\in\mc{X}} I_{\ul{x}}.
\end{equation}

\begin{example}
 Consider $\x = (1^p)$ for some $p\leq n$. Since $\x$ has a single column, it follows from (\ref{eq:detx}) that
 \[\det_{\x} = \det_p = \det(X_{ij})_{1\leq i,j\leq p}.\]
 Using the multilinear property of the determinant, it is easy to check that the $\bb{C}$-span of $\GL\cdot\det_{\x}$ is the same as that of the $p\times p$ minors of the generic matrix $(X_{ij})$. We write $I_p$ instead of $I_{(1^p)}$ for the ideal generated by these minors, and recall that $I_p$ is a prime ideal corresponding to the affine algebraic variety of matrices of rank $<p$.
\end{example}

\begin{example}
 When $n=1$ (and $m=N$) we have that every $\x\in\P_1$ has the form $\x=(d)$ for some non-negative integer $d$. If we write $X_i$ instead of $X_{i1}$ then we get that $S = \bb{C}[X_1,\cdots,X_N]$, $\det_1 = X_1$ and
 \[\det_{\x} = X_1^d.\]
One can show that the $\bb{C}$-span of $\GL\cdot X_1^d$ coincides with the vector space of homogeneous polynomials of degree $d$, i.e. $I_{\x} = \mf{m}^d$ using the notation from Section~\ref{sec:GL_N}. The acute reader might have noticed that here we are working with the product of groups $\GL_N(\bb{C}) \times \GL_1(\bb{C})$ instead of simply $\GL_N(\bb{C})$, but in fact the action of $\GL_1(\bb{C})$ is subsumed by that of $\GL_N(\bb{C})$, so the case $n=1$ is precisely the one studied in Section~\ref{sec:GL_N}.
\end{example}

Given two partitions $\x,\y\in\P_n$, we write $\x\leq\y$ if $x_i\leq y_i$ for all $i$. We say that $\x$ and $\y$ are \defi{incomparable} if neither $\x\leq\y$ nor $\y\leq x$. It is shown in \cite{deconcini-eisenbud-procesi} that
\begin{equation}\label{eq:incl-Ix-Iy}
 \x\leq\y \Longleftrightarrow I_{\x} \supseteq I_{\y},
\end{equation}
which in the case when $n=1$ is simply the statement that $d\leq e$ if and only if $\mf{m}^d \supseteq \mf{m}^e$. It follows that when considering ideals of the form (\ref{eq:defIX}) there is no harm in assuming that the partitions in $\X$ are incomparable. The following theorem completely answers Problem~\ref{prob:classification} for $\GL$-invariant ideals.

\begin{theorem}[\cite{deconcini-eisenbud-procesi}]\label{thm:GL-classification}
 The association $\mc{X}\lra I_{\mc{X}}$ establishes a bijective correspondence between
\[ \{\mbox{(finite) subsets }\mc{X}\subset \P_n\mbox{ consisting of incomparable partitions}\} \llra \{\GL-\mbox{invariant ideals }I\subseteq S\}.\]
\end{theorem}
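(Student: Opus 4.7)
The plan is to combine three ingredients: Cauchy's decomposition of $S$ as a $\GL$-module, the ideal containment (\ref{eq:incl-Ix-Iy}) already stated in the text, and the fact that $(\P_n,\leq)$ is a well-partial-order (Dickson's lemma).

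First I invoke Cauchy's formula to decompose
\[ S \;=\; \Sym(\bb{C}^m\oo\bb{C}^n) \;\simeq\; \bigoplus_{\x\in\P_n} \bb{S}_{\x}\bb{C}^m \oo \bb{S}_{\x}\bb{C}^n \]
as a $\GL$-module; the hypothesis $m\geq n$ ensures that each summand $W_{\x}:=\bb{S}_{\x}\bb{C}^m \oo \bb{S}_{\x}\bb{C}^n$ is a nonzero irreducible $\GL$-representation, and distinct $\x$'s yield pairwise non-isomorphic summands. It follows that every $\GL$-stable $\bb{C}$-subspace of $S$ is a direct sum of some subcollection of the $W_{\x}$'s; in particular, to each $\GL$-invariant ideal $I\subseteq S$ we can associate the subset
\[ T(I) \;=\; \{\,\x\in\P_n : W_{\x}\subseteq I\,\} \;\subseteq\; \P_n, \]
from which $I=\bigoplus_{\x\in T(I)}W_{\x}$ can be recovered, so the assignment $I\mapsto T(I)$ is injective.

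Next I would identify which subsets $T\subseteq\P_n$ occur as $T(I)$. Since $\det_{\x}\in W_{\x}$ is a highest weight vector, the $\GL$-orbit of $\det_{\x}$ spans $W_{\x}$, so $I_{\x}=S\cdot W_{\x}$; combined with (\ref{eq:incl-Ix-Iy}) this gives $T(I_{\x})=\{\y:\y\geq\x\}$ and $I_{\x}=\bigoplus_{\y\geq\x}W_{\y}$. From this, for any invariant ideal $I$ the set $T(I)$ is automatically an up-set in $(\P_n,\leq)$: if $\x\in T(I)$ then $I_{\x}=S\cdot W_{\x}\subseteq I$, forcing every $\y\geq\x$ into $T(I)$. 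Conversely, for any up-set $T\subseteq\P_n$ the $\GL$-stable subspace $I:=\bigoplus_{\x\in T}W_{\x}$ is closed under multiplication by $S$ since $S\cdot W_{\x}=I_{\x}=\bigoplus_{\y\geq\x}W_{\y}\subseteq I$ for each $\x\in T$. Hence $I\mapsto T(I)$ is a bijection between $\GL$-invariant ideals of $S$ and up-sets of $\P_n$.

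Finally I appeal to Dickson's lemma: since $\P_n$ embeds in $(\bb{Z}_{\geq 0}^n,\leq)$ and the latter is well-quasi-ordered, every up-set $T\subseteq\P_n$ has a finite set $\mc{X}$ of minimal elements, $\mc{X}$ is automatically an antichain, and $T=\{\y:\y\geq\x\text{ for some }\x\in\mc{X}\}$. Under $I\mapsto T(I)$ this translates into $I=\sum_{\x\in\mc{X}}I_{\x}=I_{\mc{X}}$, and conversely $\mc{X}$ is recoverable from $I_{\mc{X}}$ as the set of minimal elements of $T(I_{\mc{X}})$, giving the claimed bijection. The main technical input is the already-cited equivalence (\ref{eq:incl-Ix-Iy}); its nontrivial direction, that $\x\leq\y$ forces $\det_{\y}\in I_{\x}$, relies on the straightening law for products of minors of \cite{deconcini-eisenbud-procesi} and is the hard part of the argument, while Cauchy's formula, multiplicity-freeness of the $W_{\x}$, and Dickson's lemma are standard.
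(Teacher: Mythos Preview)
The paper does not supply a proof of this theorem; it is stated with attribution to \cite{deconcini-eisenbud-procesi} and the text moves directly to examples. Your argument is correct and is the standard way to deduce the classification once (\ref{eq:incl-Ix-Iy}) is in hand: Cauchy's formula gives a multiplicity-free $\GL$-decomposition of $S$, so $\GL$-invariant subspaces biject with subsets of $\P_n$; the equivalence (\ref{eq:incl-Ix-Iy}) then identifies the subsets arising from ideals as precisely the up-sets; and Dickson's lemma replaces each up-set by its finite antichain of minimal elements. You have also correctly isolated where the real work lies: the implication $\x\leq\y\Rightarrow\det_{\y}\in I_{\x}$ is the content of the straightening law in \cite{deconcini-eisenbud-procesi}, and the paper quotes (\ref{eq:incl-Ix-Iy}) rather than proving it for exactly this reason. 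There is nothing in the paper to compare your proof against, but it would serve perfectly well as the omitted argument.
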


Perhaps the most interesting examples of $\GL$-invariant ideals are the powers (usual, symbolic, or saturated) of the determinantal ideals $I_p$. Each such power corresponds via Theorem~\ref{thm:GL-classification} to a finite set of partitions, which can be described as follows. For the usual powers, it is shown in \cite{deconcini-eisenbud-procesi} that $I_p^d = I_{\X_p^d}$ where
\begin{equation}\label{eq:defXpd}
\mc{X}_p^d = \{\x\in\P_n: |\ul{x}|=p\cdot d,\ x_1\leq d\}
\end{equation}

\begin{example}\label{example:X-for-powers}
 Suppose that $m=n=3$ (or that $m\geq n=3$) and let $p=2$. The following table records the partitions corresponding to small powers of the ideal of $2\times 2$ minors.
\begin{center}
\setlength{\extrarowheight}{2pt}
\ytableausetup{smalltableaux,aligntableaux=center}
\tabulinesep=1.2mm
\begin{tabu}{c|c}
$d$ & $\X_2^d$ \\
\hline
1 & \ydiagram{1,1}\\
\hline
2 & \ydiagram{2,2}\quad \ydiagram{2,1,1} \\
\hline
3 & \ydiagram{3,3}\quad \ydiagram{3,2,1}\quad \ydiagram{2,2,2} \\
\hline
4 & \ydiagram{4,4}\quad \ydiagram{4,3,1}\quad \ydiagram{4,2,2} \quad \ydiagram{3,3,2}\\
\hline
5 & \ydiagram{5,5}\quad \ydiagram{5,4,1}\quad \ydiagram{5,3,2} \quad \ydiagram{4,4,2} \quad \ydiagram{4,3,3}\\
\end{tabu}
\end{center}
\end{example}

The symbolic and saturated powers of ideals of minors are obtained via saturation as follows. Recall that the \defi{saturation} of an ideal $I$ with respect to $J$ is defined via
\begin{equation}\label{eq:def-saturation}
I:J^{\infty} = \{f\in S: f\cdot J^r \subseteq I\textrm{ for }r\gg 0\}.
\end{equation}
If we let $I_p^{(d)}$ denote the $d$-th symbolic power of $I_p$, and let $(I_p^d)^{sat}$ denote the saturation of $I_p^d$ with respect to the maximal homogeneous ideal, then we have
\[ I_p^{(d)} = I_p^d : I_{p-1}^{\infty}\quad\mbox{ and }\quad\mbox (I_p^d)^{sat} = I_p^d : I_1^{\infty}.\]
It is therefore important to understand how the ideals $I_{\X}$ transform under saturation. To do so, we need to introduce one more piece of notation: given a positive integer $c$, we write $\x(c)$ for the partition defined by $\x(c)_i=\min(x_i,c)$, so that the non-zero columns of $\x(c)$ are precisely the first $c$ columns of $\x$. We then have the following.

\begin{lemma}[{\cite[Lemma~2.3]{raicu-regularity}}]\label{lem:saturation} 
 For a subset $\X\subset\P_n$ we have $I_{\X} : I_p^{\infty} = I_{\X^{:p}}$ where
\begin{equation}\label{eq:p-saturation}
 \X^{:p} = \{\x(c): \x\in\X,c\in\bb{Z}_{\geq 0}, x'_c> p\rm{ if }c>0,\rm{ and }x'_{c+1}\leq p\}
\end{equation}
\end{lemma}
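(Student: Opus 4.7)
The plan is to show both sides have the same Schur-module decomposition. Both $I_\X : I_p^\infty$ and $I_{\X^{:p}}$ are $\GL$-invariant ideals --- the former because saturation of a $\GL$-invariant ideal by a $\GL$-invariant ideal preserves $\GL$-invariance, the latter by construction --- so by Theorem~\ref{thm:GL-classification} combined with the Cauchy decomposition $S = \bigoplus_\mu \bb{S}_\mu\bb{C}^m \oo \bb{S}_\mu\bb{C}^n$, each is a direct sum of Schur summands. To prove the equality it suffices to characterize, for each partition $\mu \in \P_n$, when the summand $\bb{S}_\mu\bb{C}^m \oo \bb{S}_\mu\bb{C}^n$ lies in each side, and check these conditions coincide.

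The engine of the computation is the following highest-weight identity: since $\det_\alpha$ is the highest-weight vector of $\bb{S}_\alpha\bb{C}^m \oo \bb{S}_\alpha\bb{C}^n$ and $\det_p^r$ is the highest-weight vector of $\bb{S}_{(r^p)}\bb{C}^m \oo \bb{S}_{(r^p)}\bb{C}^n$, their product is itself a highest-weight vector and equals $\det_\beta$ for the partition $\beta$ with parts $\beta_i = \alpha_i + r$ for $i \leq p$ and $\beta_i = \alpha_i$ for $i > p$. In particular $\det_\alpha \cdot \det_p^r \in I_\X$ if and only if $\beta \geq \x$ for some $\x \in \X$. This yields $I_\X : I_p^\infty \subseteq I_{\X^{:p}}$ immediately: if the Schur summand indexed by $\y$ lies in the saturation, then $\det_\y \cdot \det_p^r = \det_{\y+(r^p)} \in I_\X$ for some $r$, so $\y+(r^p) \geq \x$ for some $\x \in \X$, and reading off the coordinates $i > p$ gives $y_i \geq x_i$ for $i > p$; a short combinatorial check (using that $x'_{c+1} \leq p$ forces $x_i \leq c$ and hence $\x(c)_i = x_i$ for $i > p$) shows this is equivalent to $\y \geq \x(c_\x)$. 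For the reverse containment, given $\x \in \X$ one takes $\alpha = \x(c)$ and $r = x_1 - c$; then $\beta_i = c + r = x_1 \geq x_i$ for $i \leq p$ (using $\x(c)_i = c$ there) and $\beta_i = x_i$ for $i > p$, so $\beta \geq \x$ and $\det_{\x(c)} \cdot \det_p^r \in I_\X$.

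The main obstacle is upgrading the last element-level containment to the ideal-level statement $\det_{\x(c)} \cdot I_p^r \subseteq I_\X$ that is actually required for $\det_{\x(c)} \in I_\X : I_p^\infty$. By $\GL$-equivariance this reduces to verifying that every Schur summand $\bb{S}_\nu\bb{C}^m \oo \bb{S}_\nu\bb{C}^n$ appearing in the product $\bb{S}_{\x(c)}\bb{C}^m \oo \bb{S}_{\x(c)}\bb{C}^n \cdot \bb{S}_\mu\bb{C}^m \oo \bb{S}_\mu\bb{C}^n$, for any $\mu$ indexing a summand of $I_p^r$ (which by (\ref{eq:defXpd}) applied to $I_p^r = I_{\X_p^r}$ ranges over partitions dominating some element of $\X_p^r$), satisfies $\nu \geq \x$. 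The naive bound $\nu \supseteq \x(c) \cup \mu$ is not sharp enough in general, and the conclusion must be extracted from the Littlewood--Richardson lattice-word constraints; these turn out to be strong enough to force $\nu_i \geq x_i$ for $i \leq p$ once $r \geq x_1 - c$, which is the heart of the combinatorial argument.
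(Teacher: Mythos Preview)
The paper does not prove this lemma; it simply quotes \cite[Lemma~2.3]{raicu-regularity}. So let me evaluate your argument on its own terms.

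Your overall strategy and the proof of the inclusion $I_{\X}:I_p^{\infty}\subseteq I_{\X^{:p}}$ are correct. The problem is the reverse inclusion, and you have correctly located the obstacle: knowing $\det_{\x(c)}\cdot\det_p^{\,r}\in I_{\X}$ does not by itself yield $\det_{\x(c)}\cdot I_p^{\,r}\subseteq I_{\X}$. However, your proposed fix via Littlewood--Richardson does not work as stated. Take $n=4$, $p=2$, $\X=\{\x\}$ with $\x=(3,3,1,1)$. Then $c=1$, $\x(c)=(1,1,1,1)$, and your bound is $r=x_1-c=2$. Since $(1,1,1,1)\in\X_2^2$ we have $\det_4\in I_2^2$, so
\[
\det_{\x(c)}\cdot\det_4=\det_4^2=\det_{(2,2,2,2)}\in I_{\x(c)}\cdot I_2^2,
\]
yet $(2,2,2,2)\not\geq(3,3,1,1)$. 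Hence $I_{\x(c)}\cdot I_2^2\not\subseteq I_{\x}$, and no Littlewood--Richardson bookkeeping can change that. For larger $r$ the inclusion does eventually hold, but you have neither identified how large $r$ must be nor carried out the argument; the final paragraph is an assertion, not a proof.

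The clean way to close the gap---and essentially the argument in \cite{raicu-regularity}---bypasses LR entirely. Since $I_{\X}$ is $\GL$-invariant and $\GL$ is connected, every associated prime of $S/I_{\X}$ is $\GL$-invariant, hence equals some $I_l$. Because $\det_p\in I_l$ if and only if $l\leq p$ if and only if $I_p\subseteq I_l$, comparing primary components gives
\[
I_{\X}:I_p^{\infty}\;=\;I_{\X}:(\det_p)^{\infty}.
\]
Now your highest-weight computation finishes both inclusions at once: $\bb{S}_{\y}\subseteq I_{\X}:(\det_p)^{\infty}$ iff $\det_{\y}\cdot\det_p^{\,r}=\det_{\y+(r^p)}\in I_{\X}$ for some $r\geq 0$, iff $\y+(r^p)\geq\x$ for some $\x\in\X$ and some $r$, and the combinatorial check you already did shows this is equivalent to $\y\geq\x(c)$ for some $\x(c)\in\X^{:p}$.
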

Thinking more concretely in terms of the corresponding Young diagrams, (\ref{eq:p-saturation}) asserts that the partitions in $\X^{:p}$ are obtained from those of $\X$ by eliminating columns of size $\leq p$. Based on this observation, one can show that $I_p^{(d)} = I_{\X_p^{(d)}}$ where
\begin{equation}\label{eq:def-X_p^(d)}
\X_p^{(d)} = \{\x\in\P_n: x_1=\cdots=x_p, x_p + x_{p+1} + \cdots + x_n = d\}
\end{equation}
Furthermore, we have that $(I_p^d)^{sat} = I_{(\X_p^d)^{sat}}$ where
\begin{equation}\label{eq:saturated-pows}
 (\mc{X}_p^d)^{sat} = \{\x\in\P_n: x_1=x_2, x_2 + x_3 + \cdots + x_n = (p-1)\cdot d\}
\end{equation}
It is important to note that both $\X_p^{(d)}$ and $(\mc{X}_p^d)^{sat}$ consist of incomparable partitions, whereas the set $\X^{:p}$ defined in (\ref{eq:p-saturation}) may contain comparable partitions even if $\X$ does not (see the example below)!

\begin{example}\label{example:X-for-symbsat-powers}
 Continuing with the notation in Example~\ref{example:X-for-powers} we get the following table recording the partitions corresponding to small symbolic powers of the ideal of $2\times 2$ minors (note that for $2\times 2$ minors the symbolic powers coincide with the saturated powers).
\begin{center}
\setlength{\extrarowheight}{2pt}
\ytableausetup{smalltableaux,aligntableaux=center}
\tabulinesep=1.2mm
\begin{tabu}{c|c|c}
$d$ & $(\X_2^d)^{:1}$ & $\X_2^{(d)} = (\X_2^d)^{sat}$ \\
\hline
1 & \ydiagram{1,1} & \ydiagram{1,1}\\
\hline
2 & \ydiagram{2,2}\quad \ydiagram{1,1,1} & \ydiagram{2,2}\quad \ydiagram{1,1,1} \\
\hline
3 & \ydiagram{3,3}\quad \ydiagram{2,2,1}\quad \ydiagram{2,2,2} & \ydiagram{3,3}\quad \ydiagram{2,2,1} \\
\hline
4 & \ydiagram{4,4}\quad \ydiagram{3,3,1}\quad \ydiagram{2,2,2} \quad \ydiagram{3,3,2} & \ydiagram{4,4}\quad \ydiagram{3,3,1}\quad \ydiagram{2,2,2} \\
\hline
5 & \ydiagram{5,5}\quad \ydiagram{4,4,1}\quad \ydiagram{3,3,2} \quad \ydiagram{4,4,2} \quad \ydiagram{3,3,3} & \ydiagram{5,5}\quad \ydiagram{4,4,1}\quad \ydiagram{3,3,2} \\
\end{tabu}
\end{center}
To put into words some of the information in this table, consider for instance $\X_2^{(2)} = \{(1,1,1),(2,2)\}$. Recall that $I_2$ is the defining ideal of the \defi{Segre variety} $Z$ consisting of matrices of rank at most $1$, and that $I_2^{(2)}$ is the ideal of functions vanishing to order at least two along $Z$ (see \cite[Section~3.9]{eisenbud-CA}). The fact that $I_2^{(2)} = I_{\X_2^{(2)}}$ can then be interpreted as follows:
\begin{itemize}
 \item The function $f_1 = \det_3 = \det(X_{ij})_{1\leq i,j\leq 3}$ vanishes to order two along $Z$, as well as every other function in its $\GL$-orbit.
 \item The function $f_2 = \det_2^2 = \left(\det(X_{ij})_{1\leq i,j\leq 2}\right)^2$ vanishes to order two along $Z$, as well as every other function in its $\GL$-orbit.
 \item Every function vanishing to order two along $Z$ is contained in the ideal generated by the $\GL$-orbits of $f_1$ and $f_2$.
\end{itemize}
\end{example}

\begin{example}\label{example:X_3^3}
 For an example where usual, symbolic and saturated powers are all distinct, consider the case when $m=n=4$, $p=3$, and $d=3$. We have
 \[\X_3^3 = \left\{ \ydiagram{3,3,3},\ \ydiagram{3,3,2,1},\ \ydiagram{3,2,2,2} \right\} \]
 \[(\X_3^3)^{sat} = \left\{ \ydiagram{3,3,3},\ \ydiagram{3,3,2,1},\ \ydiagram{2,2,2,2} \right\} \]
 \[\X_3^{(3)} = \left\{ \ydiagram{3,3,3},\ \ydiagram{2,2,2,1} \right\} \]
and note that (based on (\ref{eq:incl-Ix-Iy})) we get strict inclusions $I_3^3 \subsetneq (I_3^3)^{sat} \subsetneq I_3^{(3)}$.
\end{example}

\section{$\Ext$ modules}\label{sec:Ext}

Having described the solution to the classification Problem~\ref{prob:classification} for $\GL$-invariant ideals in $S=\bb{C}[X_{ij}]$, we now turn to the analysis of the basic homological invariants. In this section we explain the solution to Problem~\ref{prob:invariants} for the $\Ext$ groups following \cite{raicu-regularity}. The main result asserts that for the purpose of computing $\Ext^{\bullet}_S(S/I,S)$ it is enough to replace $S/I$ with the associated graded $\mf{gr}(S/I)$ for a natural finite filtration of $S/I$ where the quotients have explicitly computable $\Ext$ modules. In more technical terms, the main result is about the degeneration of the spectral sequence for computing $\Ext$, associated to the said filtration of $S/I$, but as far as we understand it this degeneration occurs for highly non-trivial reasons. For instance the proofs that we give in \cite{raicu-regularity} require an analysis of all (or at least a large class) of $\GL$-invariant ideals, and would not be applicable on a case to case basis: for example we do not know how to compute directly $\Ext^{\bullet}_S(S/I,S)$ when $I=I_p^d$ is a power of a determinantal ideal, without doing so for arbitrary (or at least sufficiently general) $\GL$-invariant ideals $I$.

\begin{theorem}\label{thm:main-Ext}
 To any $\GL$-invariant ideal $I\subseteq S$ we can associate a finite set $\mc{M}(I)$ of $\GL$-equivariant $S$-modules, arising as successive quotients in a natural filtration of $S/I$, and having the property that for each $j\geq 0$ we have a $\GL$-equivariant degree preserving isomorphism (but not an $S$-module isomorphism!)
 \[\Ext^j_S(S/I,S) \simeq \bigoplus_{M\in\mc{M}(I)} \Ext^j_S(M,S),\]
The sets $\mc{M}(I)$ and the modules $\Ext^j_S(M,S)$ for $M\in\mc{M}(I)$ can be computed explicitly. Furthermore, the association $I\mapsto\mc{M}(I)$ has the property that whenever  $I\supseteq J$ are $\GL$-invariant ideals, the (co)kernels and images of the induced maps $\Ext^j_S(S/I,S)\lra\Ext^j_S(S/J,S)$ can be computed as follows.
  \[\ker\left(\Ext^j_S(S/I,S)\lra\Ext^j_S(S/J,S)\right) = \bigoplus_{M\in\mc{M}(I)\setminus\mc{M}(J)}\Ext^{j}_S(M,S),\]
  \[\operatorname{Im}\left(\Ext^j_S(S/I,S)\lra\Ext^j_S(S/J,S)\right) = \bigoplus_{M\in\mc{M}(I)\cap\mc{M}(J)}\Ext^{j}_S(M,S),\]
  \[\coker\left(\Ext^j_S(S/I,S)\lra\Ext^j_S(S/J,S)\right) = \bigoplus_{M\in\mc{M}(J)\setminus\mc{M}(I)}\Ext^{j}_S(M,S).\]
 Finally, the $\Ext$ modules get smaller under saturation in a very precise sense: we have that
 \begin{equation}\label{eq:M-saturation}
 \mc{M}(I:I_p^{\infty}) = \{M\in\mc{M}(I): \Ann(M) \not\subseteq I_p\} 
 \end{equation}
\end{theorem}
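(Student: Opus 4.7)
The plan is to build a canonical $\GL$-equivariant filtration of $S/I$ whose successive quotients form the set $\mc{M}(I)$, to compute $\Ext^j_S(M,S)$ for each $M\in\mc{M}(I)$ via standard geometric techniques, and to verify that the resulting spectral sequence degenerates at $E_1$ on representation-theoretic grounds. Functoriality in $I$ (the kernel/image/cokernel identities) and the saturation formula will then follow by tracking how the filtration behaves under ideal inclusions and the operation of Lemma~\ref{lem:saturation}.

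The starting point is the Cauchy decomposition $S=\bigoplus_{\mu\in\P_n}\bb{S}_\mu\bb{C}^m\oo\bb{S}_\mu\bb{C}^n$, under which Theorem~\ref{thm:GL-classification} gives
\[I_{\X}=\bigoplus_{\mu\in U(\X)}\bb{S}_\mu\bb{C}^m\oo\bb{S}_\mu\bb{C}^n,\qquad U(\X):=\{\mu\in\P_n:\mu\geq\x\text{ for some }\x\in\X\}.\]
Fixing a linear extension of the componentwise order on $\P_n$ refines $I_{\X}\subseteq S$ to a chain of $\GL$-invariant ideals whose successive subquotients are canonical cyclic $\GL$-equivariant $S$-modules $M_\mu$ indexed by $\mu\in\P_n\setminus U(\X)$, each generated by a single Schur piece $\bb{S}_\mu\bb{C}^m\oo\bb{S}_\mu\bb{C}^n$. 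Setting $\mc{M}(I_{\X}):=\{M_\mu:\mu\notin U(\X)\}$ then makes the construction manifestly functorial in $I$: if $I_{\X}\supseteq I_{\Y}$ then $U(\X)\supseteq U(\Y)$ and hence $\mc{M}(I_{\X})\subseteq\mc{M}(I_{\Y})$.

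Next I would compute $\Ext^j_S(M_\mu,S)$ explicitly. The module $M_\mu$ admits a $\GL$-equivariant free resolution obtained from the geometric technique: resolve the relevant determinantal thickening by a collapse of vector bundles over a Grassmannian $G(n,\bb{C}^m)$ or a suitable flag variety, apply the Borel--Weil--Bott theorem to compute pushforward cohomologies, and invoke Grothendieck duality to read off a decomposition of $\Ext^j_S(M_\mu,S)$ as a direct sum of representations $\bb{S}_\a\bb{C}^m\oo\bb{S}_\b\bb{C}^n$ with dominant weights $\a,\b$ determined combinatorially by $\mu$ and $j$. The central technical step is then to show that the spectral sequence
\[E_1^{p,q}=\Ext^{p+q}_S(M_{\mu_p},S)\;\Longrightarrow\;\Ext^{p+q}_S(S/I_{\X},S)\]
associated to the filtration degenerates at $E_1$. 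My approach would be a Schur-theoretic disjointness argument: verify, from the explicit BWB output, that in every fixed internal degree the $\GL$-isotypic components appearing in $\Ext^j_S(M_\mu,S)$ are pairwise disjoint as $\mu$ varies over $\P_n\setminus U(\X)$. This disjointness forces every differential in the spectral sequence to vanish and yields the claimed direct-sum decomposition; the kernel, image, and cokernel formulas under $I\supseteq J$ then follow immediately from naturality of the filtration together with the direct-sum structure. I expect this disjointness---which demands uniform control of the BWB combinatorics across all $\mu$ simultaneously---to be the main obstacle.

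For the saturation formula~(\ref{eq:M-saturation}), I would combine Lemma~\ref{lem:saturation} with a computation of $\Ann(M_\mu)$. By the lemma, $I_{\X}:I_p^\infty=I_{\X^{:p}}$, so the modules removed in passing from $\mc{M}(I_{\X})$ to $\mc{M}(I_{\X^{:p}})$ are the $M_\mu$ for $\mu\in U(\X^{:p})\setminus U(\X)$; the combinatorial description~(\ref{eq:p-saturation}) pins down this set explicitly in terms of $\X$ and $p$. In parallel, the cyclic presentation of $M_\mu$ lets one identify $\Ann(M_\mu)=I_{\Y_\mu}$ as a $\GL$-invariant ideal computable directly from $\mu$, and one checks that the condition $I_{\Y_\mu}\subseteq I_p$ cuts out exactly the same set of $\mu$. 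Matching the two descriptions yields~(\ref{eq:M-saturation}).
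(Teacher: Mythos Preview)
Your construction of $\mc{M}(I)$ is not the one the theorem is about, and the mismatch creates several genuine problems.

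First, the set you define, $\mc{M}(I_{\X})=\{M_\mu:\mu\notin U(\X)\}$, is infinite whenever $S/I_{\X}$ is not Artinian (e.g.\ already for $I=I_p$ with $p\leq n$), while the statement requires a \emph{finite} set. The paper makes explicit, immediately after the theorem, that the correct modules are the $J_{\z,l}=I_{\z}/I_{\mf{succ}(\z,l)}$ indexed by pairs $(\z,l)\in\Z(\X)$ as in Definition~\ref{def:ZX} and~(\ref{eq:MIX}); these are large modules (for instance $J_{(0),l}=S/I_{l+1}$), not single-Schur subquotients, and $\Z(\X)$ is finite. Your filtration is a refinement of the one actually used, and the coarser grouping by support level $l$ is essential.

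Second, your functoriality claim that $I\supseteq J$ implies $\mc{M}(I)\subseteq\mc{M}(J)$ would force every induced map $\Ext^j_S(S/I,S)\to\Ext^j_S(S/J,S)$ to be injective (the kernel formula would have empty index set). This is false: Example~\ref{ex:Z-2x2minors} shows that $((1,1,1),0)\in\Z(\X_2^2)$ but $((1,1,1),0)\notin\Z(\X_2^3)$, so $\mc{M}(I_2^2)\not\subseteq\mc{M}(I_2^3)$ and the map on $\Ext$ has nontrivial kernel. The whole point of the pair $(\z,l)$ is that the integer $l$ can change as the ideal varies, which is invisible in an indexing by $\mu$ alone.

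Third, the degeneration is not established by an isotypic-disjointness argument across all filtration steps. In the actual proof (in the cited reference \cite{raicu-regularity}) the degeneration is obtained by an inductive analysis over the entire family of $\GL$-invariant ideals simultaneously, and the paper emphasizes that one cannot handle individual ideals in isolation. Your disjointness claim is plausible for $n=1$ (where indeed $J_{(i),0}=\mf{m}^i/\mf{m}^{i+1}$ and the $\Ext$ pieces are distinct $\bb{S}_{(-1^{N-1},-i-1)}\bb{C}^N$), but for $n\geq 2$ the modules $J_{\z,l}$ with $l\geq 1$ contribute infinitely many Schur pieces to $\Ext$, and there is no a~priori reason these are disjoint from contributions of other $(\z',l')$; the argument is more delicate.

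For the saturation formula, your outline is closer to the mark, but note that $\Ann(J_{\z,l})=I_{l+1}$, so the condition $\Ann(M)\not\subseteq I_p$ becomes $l+1>p$, i.e.\ $l\geq p$, which matches~(\ref{eq:Z-saturation}). This again requires the pair $(\z,l)$, not just $\mu$.
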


The equivariant modules $M$ appearing in the sets $\mc{M}(I)$ are indexed by pairs $(\z,l)$ where $\z\in\P_n$ is a partition and $l$ is a non-negative integer (we write $M=J_{\z,l}$ for the module corresponding to the pair $(\z,l)$). We think of $\z$ as combinatorial data, and of $l$ as geometric data: $l$ indicates the fact that the scheme theoretic support of $J_{\z,l}$ is precisely the variety of matrices of rank $\leq l$, or equivalently it says that the annihilator of $J_{\z,l}$ is the ideal $I_{l+1}$. To define $J_{\z,l}$ as a quotient of ideals we consider
\begin{equation}\label{eq:defSucc}
 \mf{succ}(\z,l) = \{\x\in\P_n:\x\geq\z\rm{ and }x_i>z_i\rm{ for some }i>l\}.
\end{equation}
and define (using notation (\ref{eq:defIx}) and (\ref{eq:defIX}))
\begin{equation}\label{eq:defJzl}
 J_{\z,l} = I_{\z}/I_{\mf{succ}(\z,l)}.
\end{equation}

\begin{example}\label{ex:Jzl-z=0}
 When $\z=(0)$ is the empty partition we get that $I_{\z} = S$ and $I_{\mf{succ}(\z,l)} = I_{l+1}$ so that
 \[ J_{(0),l} = S/I_{l+1}.\]
\end{example}

\begin{example}\label{ex:Jzl-l=0}
 When $l=0$ we have that $I_{\mf{succ}(\z,0)} = I_{\z} \cdot I_1$ and $J_{\z,0}$ is identified with the vector space of minimal generators of $I_{\z}$ (since $I_1=\mf{m}$ is the maximal homogeneous ideal of $S$). The vector space of minimal generators of $I_{\z}$ is the $\bb{C}$-span of the $\GL$-orbit $\GL\cdot\det_{\z}$ and it is isomorphic to the irreducible $\GL$-representation $\bb{S}_{\z}\bb{C}^m \oo \bb{S}_{\z}\bb{C}^n$. If we further assume that $n=1$ (and $m=N$) then $\z = (d)$ for some $d\geq 0$ and $J_{\z,0} = \mf{m}^d/\mf{m}^{d+1}\simeq \Sym^d\bb{C}^N$.
\end{example}

To explain which of the modules $M = J_{\z,l}$ appear in $\mc{M}(I)$ we need the following key definition. Recall that $\x'$ denotes the conjugate partition to $\x$, and that $\x(c)$ is the partition obtained from the first $c$ columns of $\x$, namely $\x(c)_i = \min(x_i,c)$ for all $i$.

\begin{definition}\label{def:ZX}
 For $\mc{X}\subset\P_n$ a finite subset we define $\mc{Z}(\mc{X})$ to be the set consisting of pairs $(\z,l)$ where $\z\in\P_n$ and $l\geq 0$ are such that if we write $c=z_1$ then the following hold:
 \begin{enumerate}
  \item There exists a partition $\ul{x}\in\mc{X}$ such that $\ul{x}(c)\leq\z$ and $x'_{c+1}\leq l+1$.
  \item For every partition $\ul{x}\in\mc{X}$ satisfying (1) we have $x'_{c+1}=l+1$.
 \end{enumerate}
\end{definition}

With this definition, we can make explicit the sets $\mc{M}(I)$ in Theorem~\ref{thm:main-Ext}: if $I = I_{\X}$ for $\X\subset\P_n$ then
\begin{equation}\label{eq:MIX}
 \mc{M}(I_{\X}) = \{ J_{\z,l} : (\z,l) \in \Z(\X)\}.
\end{equation}
Notice that Definition~\ref{def:ZX} does not require that the set $\X$ consist of incomparable partitions, so implicit in equation (\ref{eq:MIX}) is the fact that $\Z(\X) = \Z(\X')$ whenever $I_{\X}=I_{\X'}$, i.e. whenever $\X$ and $\X'$ have the same set of minimal partitions (with respect to $\leq$).

\begin{example}\label{ex:MI_p}
 If $\X = \{ (1^{l+1}) \}$ (so that $I_{\X} = I_{l+1}$) then one can check using Definition~\ref{def:ZX} that
 \[ \Z(\X) = \{ ((0),l) \}\]
 and therefore $\mc{M}(I_{l+1}) = \{ S/I_{l+1} \}$ consists of a single module.
\end{example}

\begin{example}\label{ex:MI-n=1}
 Suppose that $n=1$ and let $\X = \{ (d) \}$, so that $I_{\X} = \mf{m}^d$. We have that
 \[ \Z(\X) = \{ ((i),0) : i=0,\cdots,d-1 \} \]
 and using the calculation from Example~\ref{ex:Jzl-l=0} we get
 \[ \mc{M}(\mf{m}^d) = \{ \mf{m}^i/\mf{m}^{i+1} : i=0,\cdots,d-1\}.\]
 The conclusions of Theorem~\ref{thm:main-Ext} are then easy to verify in this situation (see also Theorem~\ref{thm:Ext-GLN}).
\end{example}

Just as we did in Section~\ref{sec:class-GL-invariant}, we would like to understand better the powers (usual, symbolic, saturated) of the determinantal ideals. To do so we need to describe the sets $\Z(\X_p^d)$, $\Z(\X_p^{(d)})$, and $\Z((\X_p^d)^{sat})$. In view of Lemma~\ref{lem:saturation} and (\ref{eq:MIX}), we can rewrite (\ref{eq:M-saturation}) more explicitly as
\begin{equation}\label{eq:Z-saturation}
\Z(\X^{:p}) = \{(\z,l)\in\Z(\X): l\geq p\} \subseteq \Z(\X),
\end{equation}
so knowing $\Z(\X_p^d)$ immediately determines $\Z(\X_p^{(d)})$ and $\Z((\X_p^d)^{sat})$. We have using \cite[Lemma~5.3]{raicu-regularity}

\begin{equation}\label{eq:def-Z_p^d}
\Z(\X_p^{d}) = \left\{(\z,l): 
\begin{aligned} 
& 0\leq l\leq p-1,\ \z\in\P_n,\ z_1=\cdots=z_{l+1}\leq d-1, \\  
& |\z|+(d-z_1)\cdot l  + 1\leq p\cdot d \leq |\ul{z}|+(d-z_1)\cdot (l+1)
\end{aligned}
\right\}
\end{equation}
which in turn based on (\ref{eq:Z-saturation}) implies
\begin{equation}\label{eq:def-Z_p^d-sat}
\Z((\X_p^d)^{sat}) = \{(\z,l)\in\Z(\X_p^{d}): l \geq 1\}
\end{equation}
and
\begin{equation}\label{eq:def-Z_p^(d)}
\begin{aligned}
\Z(\X_p^{(d)}) &= \{(\z,l)\in\Z(\X_p^{d}): l \geq p-1\} \\
&= \{(\z,p-1): \z\in\P_n, z_1=\cdots=z_p, z_p + z_{p+1} + \cdots + z_n \leq d-1\}\\
\end{aligned}
\end{equation}

\begin{example}\label{example:ZX_3^3}
 We continue with the situation from Example~\ref{example:X_3^3}: $m=n=4$ and $p=d=3$. One can check either directly from Definition~\ref{def:ZX}, or based on (\ref{eq:def-Z_p^d}--\ref{eq:def-Z_p^(d)}) that (if we write $\emptyset$ for the Young diagram of the $(0)$ partition)
 \[\Z(\X_3^3) = \left\{ \left(\ \ydiagram{2,2,2}\ ,2\right), \left(\ \ydiagram{1,1,1}\ ,2\right), \left(\ \ydiagram{1,1,1,1}\ ,2\right), \left(\emptyset,2\right), \left(\ \ydiagram{2,2,2,1}\ ,1\right), \left(\ \ydiagram{2,2,2,2}\ ,0\right)  \right\}\]
 \[\Z((\X_3^3)^{sat}) = \left\{ \left(\ \ydiagram{2,2,2}\ ,2\right), \left(\ \ydiagram{1,1,1}\ ,2\right), \left(\ \ydiagram{1,1,1,1}\ ,2\right), \left(\emptyset,2\right), \left(\ \ydiagram{2,2,2,1}\ ,1\right) \right\}\]
 \[\Z(\X_3^{(3)}) = \left\{ \left(\ \ydiagram{2,2,2}\ ,2\right), \left(\ \ydiagram{1,1,1}\ ,2\right), \left(\ \ydiagram{1,1,1,1}\ ,2\right), \left(\emptyset,2\right) \right\}\]
\end{example}

It is worthwhile to observe that the sets $\Z(\X_p^{(d)})$ in (\ref{eq:def-Z_p^(d)}) get larger as $d$ grows, which in view of (\ref{eq:MIX}) and Theorem~\ref{thm:main-Ext} implies that for every $d\geq 1$ and every $j\geq 0$ the induced maps
\begin{equation}\label{eq:injExt-symb}
 \Ext^j_S(S/I_p^{(d-1)},S) \lra \Ext^j_S(S/I_p^{(d)},S)
\end{equation}
are injective. This is certainly not the case if we replace symbolic powers with the usual powers, as seen for instance in the next example.

\begin{example}\label{ex:Z-2x2minors}
 Assume that we are in the situation from Examples~\ref{example:X-for-powers} and~\ref{example:X-for-symbsat-powers}: $m=n=3$ and $p=2$. Since $\Z(\X_p^{(d-1)}) \subseteq \Z(\X_p^{(d)})$ and $ \Z(\X_p^{(d)}) \subseteq  \Z(\X_p^d)$, we will record for the sake of compactness only the difference between these sets in the following table.
\begin{center}
\setlength{\extrarowheight}{2pt}
\ytableausetup{smalltableaux,aligntableaux=center}
\tabulinesep=1.2mm
\begin{tabu}{c|c|c}
$d$ & $\Z(\X_2^{(d)})\setminus\Z(\X_2^{(d-1)})$ & $\Z(\X_2^d) \setminus \Z(\X_2^{(d)})$ \\
\hline
1 & ($\emptyset$,1) & -- \\
\hline
2 & $\left(\ \ydiagram{1,1}\ ,1\right)$ & $\left(\ \ydiagram{1,1,1}\ ,0\right)$ \\
\hline
3 & $\left(\ \ydiagram{2,2}\ ,1\right) \quad \left(\ \ydiagram{1,1,1}\ ,1\right)$ & $\left(\ \ydiagram{2,2,1}\ ,0\right)$ \\
\hline
4 & $\left(\ \ydiagram{3,3}\ ,1\right) \quad \left(\ \ydiagram{2,2,1}\ ,1\right)$ & $\left(\ \ydiagram{2,2,2}\ ,0\right) \quad \left(\ \ydiagram{3,2,2}\ ,0\right) \quad \left(\ \ydiagram{3,3,1}\ ,0\right)$ \\
\hline
5 & $\left(\ \ydiagram{4,4}\ ,1\right) \ \left(\ \ydiagram{3,3,1}\ ,1\right) \ \left(\ \ydiagram{2,2,2}\ ,1\right)$  & $\left(\ \ydiagram{3,3,2}\ ,0\right) \ \left(\ \ydiagram{3,3,3}\ ,0\right) \ \left(\ \ydiagram{4,4,1}\ ,0\right) \ \left(\ \ydiagram{4,3,2}\ ,0\right)$ \\
\end{tabu}
\end{center}
\end{example}

The last piece of mystery in Theorem~\ref{thm:main-Ext} is the explicit calculation of $\Ext^{\bullet}_S(M,S)$ when $M=J_{\z,l}$. This is the content of the following (slightly weaker) version of \cite[Thm~2.5]{raicu-regularity} and \cite[Thm~3.3]{raicu-weyman}.

\begin{theorem}\label{thm:ExtJzl}
 Fix an integer $0\leq l< n$ and assume that $\z\in\P_n$ is a partition with $z_1=z_2=\cdots=z_l=z_{l+1}$. For $0\leq s\leq t_1\leq\cdots\leq t_{n-l}\leq l$ we consider the set $W(\z,l;\t,s)$ of dominant weights $\ll\in\bb{Z}^n$ satisfying
 \begin{equation}\label{eq:lam-in-W}
 \begin{cases}
 \ll_n = l - z_l - m, & \\
 \ll_{t_i+i} = t_i - z_{n+1-i} - m \quad\rm{for}\quad i=1,\cdots,n-l, & \\
 \ll_s \geq s-n \quad\rm{and}\quad \ll_{s+1} \leq s-m. & \\
 \end{cases}
 \end{equation}
 Letting $\ll(s) = (\ll_1,\cdots,\ll_s,(s-n)^{m-n},\ll_{s+1}+(m-n),\cdots,\ll_n+(m-n))\in\bb{Z}^m$, we~have
 \begin{equation}\label{eq:Extj}
 \Ext^j_S(J_{\z,l},S) = \bigoplus_{\substack{0\leq s\leq t_1\leq\cdots\leq t_{n-l}\leq l \\ m\cdot n - l^2 - s\cdot(m-n) - 2\cdot\left(\sum_{i=1}^{n-l} t_i\right)=j \\ \ll\in W(\z,l;\t,s)}} \bb{S}_{\ll(s)}\bb{C}^m \oo \bb{S}_{\ll}\bb{C}^n,
 \end{equation}
 where $\bb{S}_{\ll(s)}\bb{C}^m \oo \bb{S}_{\ll}\bb{C}^n$ appears in degree $|\ll|=\ll_1 + \cdots + \ll_n$. 
\end{theorem}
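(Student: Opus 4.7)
The plan is to realise $J_{\z,l}$ as the derived pushforward of an equivariant sheaf along a desingularisation of the rank-$\leq l$ determinantal variety, translate $\Ext^j_S(J_{\z,l},S)$ into sheaf cohomology on a Grassmannian via Grothendieck duality, and finally evaluate that cohomology using the Borel--Weil--Bott theorem. All three steps are $\GL$-equivariant, so the answer is automatically a graded $\GL$-representation, and the combinatorial data $(s,\t,\ll)$ of (\ref{eq:Extj}) will emerge from the bookkeeping of Bott's straightening procedure.

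To set this up I would work on $G=\operatorname{Gr}(n-l,\bb{C}^n)$ with tautological sequence $0\to\mc{R}\to\mc{O}_G\oo\bb{C}^n\to\mc{Q}\to 0$, and form the incidence variety $Y\subset\Spec(S)\times G$ parametrising pairs $(\phi,R)$ with $R\subseteq\ker(\phi)$. The projection $q:Y\to\Spec(S)$ is the standard Kempf--Lascoux--Weyman desingularisation of the rank-$\leq l$ locus, while the structure morphism $\pi:Y\to G$ realises $Y$ as the total space of an equivariant vector bundle. Using the hypothesis $z_1=\cdots=z_{l+1}$, one builds an equivariant locally free sheaf $\mc{V}_{\z}$ on $Y$ (by applying Schur functors associated to $\z$ to pullbacks of $\mc{Q}$ and of the universal morphism) whose derived pushforward $Rq_*\mc{V}_{\z}$ is concentrated in degree zero and equals $J_{\z,l}$ as an $S$-module.

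Grothendieck duality then gives
\[ R\Hom_S(J_{\z,l},S)\simeq Rq_*\bigl(\mc{V}_{\z}^{\vee}\oo q^!\mc{O}_{\Spec S}\bigr), \]
and because $q$ factors through a closed immersion into $\Spec(S)\times G$ followed by a smooth projection, $q^!\mc{O}_{\Spec S}$ is an explicit twist of $\pi^*\om_G$ by the top exterior power of the bundle defining $\pi$. Pushing first along $\pi$ therefore reduces each $\Ext^j_S(J_{\z,l},S)$ to a cohomology group on $G$ of the form $\bigoplus_{\mu,\nu}H^{j-c}(G,\bb{S}_{\mu}\mc{R}\oo\bb{S}_{\nu}\mc{Q})\oo\bb{W}_{\mu,\nu}$, where $c$ is a cohomological shift coming from duality (absorbing the codimension $mn-l^2$) and each $\bb{W}_{\mu,\nu}$ is an explicit $\GL_m(\bb{C})$-representation built from Schur functors of $\bb{C}^m$. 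Borel--Weil--Bott now takes over: each such cohomology is either zero or a single irreducible $\GL_n(\bb{C})$-representation, obtained by applying the shortest Weyl element that straightens $(\mu,\nu)+\rho$. The integer $s$ records where the resulting dominant weight $\ll$ splits into its two blocks (hence the chamber condition $\ll_s\geq s-n$, $\ll_{s+1}\leq s-m$), each $t_i$ records how many of the first $l$ coordinates the $i$-th entry coming from $\mc{R}$ must cross, the equations $\ll_n=l-z_l-m$ and $\ll_{t_i+i}=t_i-z_{n+1-i}-m$ are exactly the constraints inherited from the initial weights of $\mc{V}_{\z}$ and $\om_G$, and the parallel (simpler) BWB reduction on the $\bb{C}^m$-factor produces $\ll(s)$, the $(s-n)^{m-n}$ block arising because $m-n$ of the $\GL_m$-coordinates are rigidly pinned by that reduction. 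The total degree $j=mn-l^2-s(m-n)-2\sum_i t_i$ is then the sum of the codimension and the reflection lengths on both factors.

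The principal obstacle is the combinatorial bookkeeping: one must match the nonvanishing Bott contributions to the recipe in (\ref{eq:lam-in-W}) and verify there is no overcounting. The standing hypothesis $z_1=\cdots=z_{l+1}$ is essential, since it forces the starting weights of $\mc{V}_{\z}$ into a range where the straightening on $G$ cleanly decomposes along the parameters $(s,\t)$ and no extra dominant contributions appear. Once this case analysis is carried out, the grading claim is automatic, because every construction respects the natural grading of $S$.
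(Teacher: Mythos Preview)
The paper itself does not give a proof of this theorem; it merely records it as a (slightly weakened) restatement of \cite[Thm.~2.5]{raicu-regularity} and \cite[Thm.~3.3]{raicu-weyman}. Your outline---realise $J_{\z,l}$ via the Kempf--Lascoux--Weyman collapsing over a Grassmannian, apply Grothendieck duality, then evaluate with Borel--Weil--Bott---is exactly the method the paper advertises in its introduction as underlying the $\Ext$ computations, and it is the strategy carried out in those references, so your proposal is on the right track and aligned with the intended proof.
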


The formulas in Theorem~\ref{thm:ExtJzl} are quite involved, but nevertheless they are completely explicit. In particular they allow us to determine which graded components of the modules $\Ext^j_S(J_{\z,l},S)$ are non-zero, and as a consequence determine a formula for the Castelnuovo--Mumford regularity of $J_{\z,l}$. Combining this with Theorem~\ref{thm:main-Ext}, we get formulas for $\reg(S/I)$ for an arbitrary $\GL$-invariant ideal $I$. Unfortunately these are not closed formulas, but rather they involve an often difficult linear integer optimization problem (see \cite[Theorem~2.6]{raicu-regularity} and \cite[Section~4]{raicu-regularity}). Here we will content ourselves with showing that Theorem~\ref{thm:ExtJzl} combined with Theorem~\ref{thm:main-Ext} recovers the description of the $\Ext$ modules in Theorem~\ref{thm:Ext-GLN}. For another example of the concrete calculation of $\Ext$ modules see \cite[Section~7]{raicu-regularity}.

\begin{example}\label{ex:proof-Ext-GLN}
 Assume that $n=1$, $m=N$, and write $X_i = X_{i1}$ so that $S = \bb{C}[X_1,\cdots,X_N]$. Consider $\X = \{ (d) \}$ for some $d\geq 0$, so that $I_{\X} = \mf{m}^d$. We have seen in Example~\ref{ex:MI-n=1} that $\Z(\X) = \{ ((i),0) : i=0,\cdots,d-1 \}$ and in Example~\ref{ex:Jzl-l=0} that $J_{(i),0} = \mf{m}^i / \mf{m}^{i+1}$. Theorem~\ref{thm:main-Ext} implies that
 \begin{equation}\label{eq:ExtS/m^d}
  \Ext^j_S(S/\mf{m}^d,S) \simeq \bigoplus_{i=0}^{d-1} \Ext^j_S(\mf{m}^i/\mf{m}^{i+1},S)\mbox{ for all }j,
 \end{equation}
so it is enough to compute $\Ext^j_S(\mf{m}^i/\mf{m}^{i+1},S)$ based on Theorem~\ref{thm:ExtJzl}. Fix $(\z,l) = ((i),0)$ for some $0\leq i\leq d-1$ and observe that since $l=0$, (\ref{eq:Extj}) forces $s=t_1=0$ and therefore the only potentially non-zero $\Ext$ module occurs for
\[ j = m\cdot n - l^2 - s\cdot(m-n) - 2\cdot\left(\sum_{i=1}^{n-l} t_i\right) = N.\]
Moreover, we get that $W(\z,l;\t,s) = W((i),0;(0),0)$ consist of a single dominant weight $\ll \in \bb{Z}^1$, namely $\ll = (-i-N)$, and for that weight we have
\[ \ll(s) = \ll(0) = (-1^{N-1},-i-N + N - 1) = (-1^{N-1},-i - 1).\]
This shows that
 \begin{equation}\label{eq:m^i/m^i+1}
\Ext^N_S(\mf{m}^i/\mf{m}^{i+1},S) \simeq \bb{S}_{(-1^{N-1},-i - 1)}\bb{C}^N \oo \bb{S}_{-i-N}\bb{C}^1 \simeq \bb{S}_{(-1^{N-1},-i - 1)}\bb{C}^N
 \end{equation}
where the last isomorphism simply disregards the $\GL_1(\bb{C})$-action on the second factor and is only $\GL_N(\bb{C})$-equivariant. Combining (\ref{eq:ExtS/m^d}) with (\ref{eq:m^i/m^i+1}) yields the conclusion of Theorem~\ref{thm:Ext-GLN}.
\end{example}

\section{Local cohomology modules}\label{sec:H_I}

Having computed the $\Ext$ modules in the previous section, as well as the induced maps between them, the description of local cohomology is a consequence of (\ref{eq:loccoh=limExt}). We begin by recalling that the local cohomology modules $H_I^{\bullet}(S)$ depend on $I$ only up to radical. Moreover, any $\GL$-invariant radical ideal $I \subseteq S = \bb{C}[X_{ij}]$ corresponds to a $\GL$-invariant algebraic subset of the space $\bb{C}^{m\times n}$ of $m\times n$ matrices; since any such algebraic set is the set of matrices of rank $<p$ for some value of $p$, it follows that every $\GL$-invariant radical ideal is of the form $I_p$ for some $p$. It is then sufficient to study
\[ H_{I_p}^{j}(S) = \varinjlim_d \Ext^{j}_S(S/I_p^d,S)\mbox{ for }j\geq 0.\]
Based on Theorem~\ref{thm:main-Ext}, it follows that
\begin{equation}\label{eq:HIp-Mp}
 H_{I_p}^{j}(S) = \bigoplus_{M\in \mc{M}_p} \Ext^{j}_S(M,S),
\end{equation}
where $\mc{M}_p \subseteq \bigcup_d \mc{M}(I_p^d)$ is the subset consisting of those $M$ for which $M \in \mc{M}(I_p^d)$ for all $d\gg 0$. We encourage the reader to verify directly that
\[ \mc{M}_p = \bigcup_{d} \mc{M}(I_p^{(d)}) \overset{(\ref{eq:def-Z_p^(d)})}{=} \{J_{\z,p-1} : \z\in\P_n,\mbox{ and }z_1=\cdots=z_p\},\]
which has an alternative explanation as follows. One can tweak the formula (\ref{eq:loccoh=limExt}) and get
\[H_I^j(S) = \varinjlim_d \Ext^j_S(S/J_d,S)\]
where $(J_{d})_d$ is any sequence of ideals which is cofinal with the sequence of powers $(I^d)_{d}$. If we take $J_d = I_p^{(d)}$ and use the injectivity of the maps (\ref{eq:injExt-symb}) we get that
\[ H_{I_p}^{j}(S) = \bigoplus_{M\in \bigcup_{d} \mc{M}(I_p^{(d)})} \Ext^{j}_S(M,S),\]
as desired. In \cite{raicu-weyman} we have used yet another sequence of ideals to compute local cohomology, namely $J_d = I_{d\times p}$, where $\x = d\times p$ denotes the partition whose Young diagram is the $d\times p$ rectangle, i.e. $x_1=\cdots=x_p=d$ and $x_i = 0$ for $i>p$. Coincidentally, the ideals $I_{d\times p}$ are precisely the ones for which we can compute all the syzygy modules, as explained in the next section. Just as for symbolic powers, it is the case that the sets $\mc{M}(I_{d\times p})$ increase with $d$, and their union is precisely the set $\mc{M}_p$.

To give a cleaner description of the modules $H_{I_p}^{\bullet}(S)$ we introduce some notation. We consider the \defi{generalized binomial coefficients}, also known as \defi{q-binomial coefficients} or \defi{Gauss polynomials}, to be the following polynomials in the indeterminate $q$, depending on non-negative integers $a,b$:
\begin{equation}\label{eq:defqbinomial}
{a\choose b}_q=\frac{(1-q^a)(1-q^{a-1})\cdots(1-q^{a-b+1})}{(1-q^b)(1-q^{b-1})\cdots(1-q)}. 
\end{equation}
If we set $q=1$ then we recover the usual binomial coefficients:
\[ {a\choose b}_1 = {a\choose b} = \frac{a!}{b! \cdot (a-b)!}.\]
As another example, if we let $a=4$ and $b=2$ then
\[ {4\choose 2}_q = 1 + q + 2q^2 + q^3 + q^4.\]
In what follows we consider formal linear combinations $\sum_j A^j \cdot q^j$, where $A^j = \bigoplus_{i\in\bb{Z}} A^j_i$ is a graded $\GL$-representation. We will interpret an equality of the form
\[ \sum_j A^j \cdot q^j = \sum_j B^j \cdot q^j \]
to mean that we have graded $\GL$-equivariant isomorphisms $A^j \simeq B^j$ for all $j$, i.e. that $A_i^j \simeq B_i^j$ for every $i$ and $j$. With these conventions, the formula (\ref{eq:HIp-Mp}) can then be written more explicitly as follows.

\begin{theorem}[{\cite[Thm.~6.1]{raicu-weyman}, \cite[Main Theorem(1)]{raicu-weyman-loccoh}}]\label{thm:loccoh}
If we think of $H_{I_p}^j(S)$ as a graded $\GL$-representation (with the natural grading inherited from that on $S$) then we have an equality
\begin{equation}\label{eq:genfun-loccoh}
\sum_{j\geq 0}H_{I_p}^j(S)\cdot q^j = \sum_{s=0}^{p-1} D_s\cdot q^{(n-p+1)^2+(n-s)\cdot(m-n)}\cdot{n-s-1\choose p-1-s}_{q^2}
\end{equation}
where $D_s$ is a graded $\GL$-representation which decomposes as
\[D_s = \bigoplus_{\substack{\ll=(\ll_1\geq\cdots\geq\ll_n)\in\bb{Z}^n \\ \ll_s\geq s-n \\ \ll_{s+1}\leq s-m}} \bb{S}_{\ll(s)}\bb{C}^m\oo \bb{S}_{\ll}\bb{C}^n\]
with $\bb{S}_{\ll(s)}\bb{C}^m \oo \bb{S}_{\ll}\bb{C}^n$ living in degree $|\ll|=\ll_1+\cdots+\ll_n$.
\end{theorem}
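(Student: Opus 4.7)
The plan is to expand the identity
\[ H_{I_p}^j(S) = \bigoplus_{\substack{\z\in\P_n \\ z_1=\cdots=z_p}} \Ext^j_S(J_{\z,p-1},S) \]
singled out in the paragraph preceding the theorem, using the explicit formula of Theorem~\ref{thm:ExtJzl} with $l=p-1$, and to rearrange the resulting double sum. Each triple $(s,\t,\ll)$ with $0\leq s\leq t_1\leq\cdots\leq t_{n-p+1}\leq p-1$ and $\ll\in W(\z,p-1;\t,s)$ contributes $\bb{S}_{\ll(s)}\bb{C}^m\oo\bb{S}_{\ll}\bb{C}^n$ in cohomological degree $j=m\cdot n-(p-1)^2-s\cdot(m-n)-2\sum_{i=1}^{n-p+1}t_i$; the goal is to trade the outer sum over $\z$ for a combinatorial $q$-factor, leaving a residual sum over $\ll$ that reproduces $D_s$.

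The first observation is that $t_{n-p+1}=p-1$ is automatic. Using $z_{p-1}=z_p$, the identities $\ll_n=(p-1)-z_p-m$ and $\ll_{t_{n-p+1}+n-p+1}=t_{n-p+1}-z_p-m$ combined with $t_{n-p+1}+n-p+1\leq n$ and the monotonicity of $\ll$ give $\ll_{t_{n-p+1}+n-p+1}-\ll_n=t_{n-p+1}-(p-1)$, forcing both sides to vanish. With this reduction, the next step is to swap the order of summation: given $s$, a weakly increasing sequence $(t_1,\ldots,t_{n-p})$ with $s\leq t_i\leq p-1$, and a weakly decreasing $\ll\in\bb{Z}^n$ satisfying $\ll_s\geq s-n$ and $\ll_{s+1}\leq s-m$, the partition $\z$ is recovered from the equations $z_{n+1-i}=t_i-m-\ll_{t_i+i}$ for $i=1,\ldots,n-p+1$ (using $t_{n-p+1}=p-1$) together with $z_1=\cdots=z_p$. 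Checking that the recovered $\z$ lies in $\P_n$ is routine: $z_n\geq 0$ follows from $\ll_{t_1+1}\leq\ll_{s+1}\leq s-m\leq t_1-m$, and $z_{n+1-i}\geq z_{n-i}$ follows from $t_i\leq t_{i+1}$ together with $\ll_{t_i+i}\geq\ll_{t_{i+1}+i+1}$.

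Under this bijection the contribution of each $s$ to the generating function factors as $D_s$ multiplied by a sum over weakly increasing sequences $(t_1,\ldots,t_{n-p})$ in $\{s,\ldots,p-1\}$ weighted by $q^{-2\sum_{i=1}^{n-p}t_i}$, times a fixed $q$-power. The substitution $u_i=p-1-t_{n-p+1-i}$ turns this combinatorial sum into the classical generating function $\sum q^{2\sum u_i}$ over weakly increasing sequences $0\leq u_1\leq\cdots\leq u_{n-p}\leq p-1-s$, which equals the $q^2$-binomial ${n-s-1\choose p-1-s}_{q^2}$. The minimum of $j$, attained at $t_i\equiv p-1$, equals $m\cdot n-(p-1)^2-s\cdot(m-n)-2(p-1)(n-p+1)$ and simplifies to $(n-p+1)^2+(n-s)\cdot(m-n)$ via the identity $n^2=((p-1)+(n-p+1))^2$, producing the $q$-shift in (\ref{eq:genfun-loccoh}). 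The main obstacle in this plan is the combinatorial bookkeeping of the bijection: verifying that the partition-positivity conditions on the recovered $\z$ collapse exactly to the $D_s$-constraints on $\ll$, with no double counting from possible coincidences among the positions $t_i+i$ and no missing boundary contributions at $s=0$ or $s=p-1$.
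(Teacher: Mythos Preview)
Your proposal is correct and follows exactly the route the paper lays out: the survey does not give an independent proof but indicates that Theorem~\ref{thm:loccoh} is the explicit rewriting of the identity $H_{I_p}^j(S)=\bigoplus_{z_1=\cdots=z_p}\Ext^j_S(J_{\z,p-1},S)$ via Theorem~\ref{thm:ExtJzl}, and you carry out precisely that combinatorial reorganization (the forcing $t_{n-p+1}=p-1$, the bijection trading $\z$ for $(\t,\ll)$, and the identification of the residual $\t$-sum with the $q^2$-binomial). One slip of the pen: the partition inequality you need, and that your stated reasons actually establish, is $z_{n-i}\geq z_{n+1-i}$, not the reverse; your worries about coincident indices $t_i+i$ and boundary values of $s$ are unfounded, since $t_i+i<t_{i+1}+(i+1)$ is automatic and the edge cases $s=0$, $s=p-1$, $p=n$ all reduce correctly.
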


The formula (\ref{eq:genfun-loccoh}) is just a shadow of the deeper $\D$-module structure of the local cohomology modules. More precisely, the graded representations $D_s$ in Theorem~\ref{thm:loccoh} are the underlying vector spaces of the simple $\GL$-equivariant $\D$-modules on $\bb{C}^{m\times n}$, where $D_s$ corresponds to the module supported on rank $\leq s$ matrices \cite{raicu-dmods}. The local cohomology modules have finite length when regarded as $\D$-modules, and the multiplicities of the composition factors $D_s$ in their Jordan--H\"older filtrations can be read off by equating the two sides of (\ref{eq:genfun-loccoh}).

\begin{example}\label{ex:loccoh-n=1}
 Consider $n=1$ (and $m=N$), and take $p=1$ (so that $I_p = \mf{m}$). The equation (\ref{eq:genfun-loccoh}) becomes
 \[\sum_{j\geq 0}H_{\mf{m}}^j(S)\cdot q^j = D_0 \cdot q^N.\]
 This means that $H_{\mf{m}}^j(S) = 0$ for $j<N$, and $H_{\mf{m}}^N(S) \simeq D_0$ as graded representations. Note that the formula for $D_0$ in Theorem~\ref{thm:loccoh} specializes to (\ref{eq:loccoh-GLN}).
\end{example}

\begin{example}\label{ex:loccoh-bigger}
 For a slightly bigger examples which shows that multiplicities bigger than $1$ typically occur in local cohomology modules, consider $m=n=5$ and $p=3$. We get
 \[\sum_{j\geq 0}H_{I_3}^j(S)\cdot q^j = q^9\cdot \left(D_0 \cdot {4\choose 2}_q + D_1 \cdot {3\choose 1}_q + D_2 \cdot {2\choose 0}_q \right)\]
 \[=q^9\cdot(D_0+D_1+D_2) + q^{10}\cdot(D_0+D_1) + q^{11}\cdot(2D_0+D_1) + q^{12}\cdot D_0 + q^{13} \cdot D_0.\]
 This formula shows that $H_{I_3}^j(S) = 0$ for $j<9$ or $j>13$. It also says for instance that $H_{I_3}^9(S) \simeq D_0 \oplus D_1 \oplus D_2$ as graded $\GL$-representations, or that the $\D$-module $H_{I_3}^9(S)$ has length three, with composition factors $D_0,D_1,D_2$, each occurring with multiplicity one. The $\D$-module $H_{I_3}^{11}(S)$ also has length three, but only two distinct composition factors: $D_0$ occurring with multiplicity two, and $D_1$ with multiplicity one. Forgetting the $\D$-module structure we have an isomorphism of graded $\GL$-representations $H_{I_3}^{11}(S) \simeq D_0^{\oplus 2} \oplus D_1$.
\end{example}

We close the section by remarking that analogous versions of Theorem~\ref{thm:loccoh} hold for ideals of minors of a generic symmetric matrix, and ideals of Pfaffians of a generic skew-symmetric matrix (see \cite{raicu-weyman-loccoh} and \cite{raicu-weyman-witt}).

\section{Syzygy modules}\label{sec:Tor}

We keep the notation from the previous sections. Given a $\GL$-invariant ideal $I\subset S=\bb{C}[X_{ij}]$, we are interested in studying the vector spaces of $i$-\defi{syzygies} of $I$ 
\begin{equation}\label{eq:defsyzygies}
B_{i}(I)=\Tor_i^S(I,\bb{C}) 
\end{equation}
which are naturally graded $\GL$-representations. We encode these syzygies into the \defi{equivariant Betti polynomial}
\begin{equation}\label{eq:defequivBetti}
B_{I}(q)=\sum_{i\in\bb{Z}} B_i(I) \cdot q^i,
\end{equation}
where $q$ is an indeterminate, and the coefficients of $B_I(q)$ are graded $\GL$-representations (and they are finite dimensional, unlike in the previous section when we studied local cohomology modules). In the case when $I = I_p$, the problem of describing the syzygies of determinantal ideals was solved in \cite{lascoux} (see also \cite{pragacz-weyman} and \cite[Chapter 6]{weyman}). For the powers of the ideal of maximal minors ($I = I_n^d$) a description of the syzygy modules was given in \cite{akin-buchsbaum-weyman}. Perhaps the most well-known result in this area refers to the case $I = I_n$ (which is a special case of both of the results quoted above), where the syzygies of the ideal of maximal minors are given by the Eagon--Northcott resolution \cite[Appendix~A.2.6]{eisenbud-CA}.

A description of the equivariant Betti polynomials in (\ref{eq:defequivBetti}) is still unknown for an arbitrary $\GL$-invariant ideal $I$. It is known however for an important class ideals which we describe next. Consider positive integers $a,b$ with $a\leq n$ and consider the partition $\x=a\times b$ defined by
\[ x_1=\cdots=x_a = b,\ x_i = 0\mbox{ for }i>a.\]
It is the ideals $I_{a\times b}$ (see (\ref{eq:defIx}) for the general definition of the ideals $I_{\x}$) for which we will be able to describe the syzygy modules in Theorem~\ref{thm:syzygies} below. One quick way to define $I_{a\times b}$ is as the smallest $\GL$-invariant ideal which contains the $b$-th powers of the $a\times a$ minors of the generic matrix $(X_{ij})$. We note that $I_{p\times 1} = I_p$ and that $I_{n\times d} = I_n^d$, so that Theorem~\ref{thm:syzygies} will recover as special cases the aforementioned results of \cites{lascoux,akin-buchsbaum-weyman}. 

Before stating the main result we introduce some notation. If $r,s$ are positive integers, $\a$ is a partition with at most $r$ parts (i.e. $\a_i=0$ for $i>r$) and $\b$ is a partition with parts of size at most $s$ (i.e. $\b_1\leq s$), we construct the partition
\begin{equation}\label{eq:defllrsab}
\ll(r,s;\a,\b)=(s+\a_1,\cdots,s+\a_r,\b_1,\b_2,\cdots). 
\end{equation}
This is easiest to visualize in terms of Young diagrams: one starts with an $r\times s$ rectangle, and attach $\a$ to the right and $\b$ to the bottom of the rectangle. If $r=4$, $s=5$, $\a=(4,2,1)$, $\b=(3,2)$, then
\begin{equation}\label{eq:Yngllrsab}
\ll(r,s;\a,\b)=(9,7,6,5,3,2) \llra \ytableausetup{smalltableaux,aligntableaux=center}
\ytableaushort{\none\none\none\none\none\a\a\a\a,\none\none\none\none\none\a\a,\none\none\none\none\none\a,\none,\b\b\b,\b\b} * {9,7,6,5,3,2}
\end{equation}
Recall that $\mu'$ denotes the conjugate partition to $\mu$ and consider the polynomials $h_{r\times s}(q)$ given by
\begin{equation}\label{eq:defhrs}
h_{r\times s}(q)=\sum_{\a,\b}(\bb{S}_{\ll(r,s;\a,\b)}\bb{C}^m \oo \bb{S}_{\ll(r,s;\b',\a')}\bb{C}^n)\cdot  q^{|\a|+|\b|}, 
\end{equation}
where the sum is taken over partitions $\a,\b$ such that $\a$ is contained in the $\min(r,s)\times(n-r)$ rectangle (i.e. $\a_1\leq n-r$, $\a_1'\leq\min(r,s)$) and $\b$ is contained in the $(m-r)\times\min(r,s)$ rectangle (i.e. $\b_1\leq\min(r,s)$ and $\b_1'\leq m-r$), and the representation $\bb{S}_{\ll(r,s;\a,\b)}\bb{C}^m \oo \bb{S}_{\ll(r,s;\b',\a')}\bb{C}^n$ is placed in degree 
\[|\ll(r,s;\a,\b)| = |\ll(r,s;\b',\a')| = r\cdot s + |\a| + |\b|.\]
With this notation we have the following.

\begin{theorem}[{\cite[Thm.~3.1]{raicu-weyman-syzygies}}]\label{thm:syzygies}
 The equivariant Betti polynomial of the ideal $I_{a\times b}$ is
\[B_{I_{a\times b}}(q)=\sum_{t=0}^{n-a} h_{(a+t)\times(b+t)}(q)\cdot q^{t^2+2t}\cdot{t+\min(a,b)-1\choose t}_{q^2}\]
\end{theorem}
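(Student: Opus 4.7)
The plan is to apply the Kempf--Lascoux--Weyman geometric technique to a suitable desingularization of the determinantal variety $V(I_a)$ and to extract the equivariant Betti numbers via the Borel--Weil--Bott theorem. Since $\sqrt{I_{a\times b}}=I_a$, the natural desingularization to work with is the incidence variety
\[Z=\{(R,\phi)\in\operatorname{Gr}(n-a+1,\bb{C}^n)\times\bb{C}^{m\times n}:\phi|_R=0\},\]
with projection $p\colon Z\to\bb{C}^{m\times n}$. The ideal sheaf of $Z$ inside $\operatorname{Gr}(n-a+1,\bb{C}^n)\times\bb{C}^{m\times n}$ is resolved by the Koszul complex on the bundle $\mc{R}^*\oo\bb{C}^m$, where $\mc{R}$ is the tautological rank-$(n-a+1)$ subbundle.

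First I would identify a $\GL$-equivariant sheaf $\mc{F}$ on $Z$, built from tensor powers of the tautological line bundles $\det(\mc{R})$ and $\det(\mc{Q})$, whose pushforward $p_*\mc{F}$ is $I_{a\times b}$ up to an equivariant twist. For $b=1$ this is just $\mc{F}=\mc{O}_Z$, recovering Lascoux's resolution of $I_a$; for general $b$ one expects an extra factor that encodes the Schur components $\bb{S}_{a\times b}\bb{C}^m\oo\bb{S}_{a\times b}\bb{C}^n$ which generate $I_{a\times b}$ as a $\GL$-representation. I would verify this identification by comparing global sections on both sides using Cauchy's formula combined with Borel--Weil--Bott.

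Next I would tensor the Koszul resolution of $\mc{O}_Z$ with $\mc{F}$ and compute $Rp_*$ of each term. Every term decomposes as a direct sum of Schur functors in $\mc{R}^*$ and $\mc{Q}^*$ tensored with Schur functors in $\bb{C}^m$, and the higher direct images are controlled by Bott's algorithm on $\operatorname{Gr}(n-a+1,\bb{C}^n)$. The partitions $\a$ and $\b$ appearing in the definition of $h_{r\times s}(q)$ should parametrize the Schur functors produced after attaching data to an $r\times s$ rectangle, with the pair $(r,s)=(a+t,b+t)$ tracking a successive ``enlargement'' of the base rectangle by $t$ steps. The factor $q^{t^2+2t}$ would then come from the length of the Weyl-group element required to desingularize an initially singular weight after $t$ applications of Bott's procedure, while the $q^2$-binomial coefficient $\binom{t+\min(a,b)-1}{t}_{q^2}$ should enumerate the combinatorial choices of intermediate Schur data, corresponding to partitions in a $t\times(\min(a,b)-1)$ rectangle.

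The main obstacle will be establishing that the resulting hypercohomology spectral sequence degenerates, so that the clean generating function on the right-hand side of the theorem genuinely computes the individual Betti numbers without cancellations between distinct homological degrees. This is the step at which the representation theory of the general linear Lie superalgebra $\mathfrak{gl}(m|n)$ alluded to in the introduction enters: its character calculus organizes the Schur contributions from each cohomological stratum so that, after applying the Koszul duality built into the geometric technique, each Schur functor lands in a uniquely predictable homological degree. Once degeneration is in hand, the theorem follows by regrouping terms according to the enlargement parameter $t$.
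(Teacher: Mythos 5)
Your plan follows the Kempf--Lascoux--Weyman geometric technique, which is not the route taken in \cite{raicu-weyman-syzygies}. There, $\Tor^S_\bullet(I_{a\times b},\bb{C})$ is computed by identifying Koszul homology of a $\GL$-equivariant $S$-module with Lie superalgebra cohomology and then exploiting the structure of representations of $\mathfrak{gl}(m|n)$: the polynomials $h_{r\times s}(q)$ are precisely the $\GL$-characters of certain simple $\mathfrak{gl}(m|n)$-modules (following Akin--Weyman and Sam), and the sum over $t$ in the theorem comes from a filtration argument on the $\mathfrak{gl}(m|n)$ side, not from a geometric spectral sequence. You correctly sense that $\mathfrak{gl}(m|n)$ must be relevant, but you have the logical order backwards: in the actual proof it is the engine, not an auxiliary device invoked to establish degeneration of a hypercohomology spectral sequence.

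Beyond the mismatch in method, your plan has a gap that would stop it before it starts: no sheaf $\mc{F}$ of the type you allow has $p_*\mc{F}\cong I_{a\times b}$. On your $Z\to\operatorname{Gr}(n-a+1,\bb{C}^n)$ the quotient bundle $\mc{Q}$ has rank $a-1$, so by Cauchy and Bott the pushforward of $\mc{O}_Z\otimes(\det\mc{Q})^{\otimes c}$ is
\[
\bigoplus_{\mu\in\P_{a-1}}\bb{S}_{\mu}\bb{C}^m\otimes\bb{S}_{\mu+(c^{a-1})}\bb{C}^n,
\]
a maximal Cohen--Macaulay $S/I_a$-module whose two Schur factors carry \emph{different} weights and involve only partitions with at most $a-1$ rows. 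The ideal $I_{a\times b}=\bigoplus_{\lambda\geq(b^a)}\bb{S}_\lambda\bb{C}^m\otimes\bb{S}_\lambda\bb{C}^n$, by contrast, has equal weights on both factors and requires $\lambda_a\geq b>0$, hence partitions with at least $a$ rows. Twisting by powers of $\det\mc{R}$ does not repair this. So the step ``verify the identification by comparing global sections via Cauchy and Borel--Weil--Bott'' cannot succeed: the two sides already disagree as $\GL$-representations for every choice of line-bundle twist on the desingularization of $V(I_a)$. Finally, even if one located a different desingularization and equivariant sheaf whose derived pushforward is $I_{a\times b}$ concentrated in degree zero, minimality of the resulting KLW complex is precisely the substance of the theorem, and gesturing at $\mathfrak{gl}(m|n)$ character calculus as a black box to dispose of it names the difficulty rather than resolving it.
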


\begin{example}\label{ex:syz-n=1}
 Suppose that $n=1$ (and $m=N$) and let $a=1$, $b=d$ (so that $I_{a\times b} = \mf{m}^d$). We get that
 \[ B_{\mf{m}^d}(q) = h_{1\times d}(q) = \sum_{p=0}^{N-1} (\bb{S}_{d,1^p}\bb{C}^N \oo \bb{S}_{d+p}\bb{C}^1)\cdot q^p\]
 which, if we forget the $\GL_1(\bb{C})$-action on the second factor, recovers the conclusion of Theorem~\ref{thm:syz-GLN}.
\end{example}

\begin{example}\label{ex:syz-22-3x3}
 Suppose that $m=n=3$ and let $a=b=2$. We get that
 \[ B_{I_{2\times 2}}(q) = h_{2\times 2}(q) + h_{3\times 3}(q)\cdot q^3 \cdot {2\choose 1}_{q^2} = h_{2\times 2}(q) + q^3\cdot h_{3\times 3}(q) + q^5\cdot h_{3\times 3}(q).\]
 Before analyzing the terms further, it is useful to record the Betti table of $I_{2\times 2}$ that Macaulay2 \cite{M2} computes (recall the convention that the Betti number $\b_{i,i+j} = \dim_{\bb{C}}\Tor^S_i(I_{2\times 2},S)_{i+j}$ is placed in row $j$, column $i$):
 \begin{equation}\label{eq:betti-table}
 \begin{matrix}
      &0&1&2&3&4&5\\\text{total:}&36&90&84&37&9&1\\\text{4:}&36&90&84&36&9&1\\\text{5:}&\text{.}&\text{.}&\text{.}&\text{.}&\text{.}&\text{.}\\\text{6:}&\text{.}&\text{.}&\text{.}&1&\text{.}&\text      
      {.}\\\end{matrix}
\end{equation}
We claim that $h_{2\times 2}(q)$ corresponds to the strand $[36\quad 90\quad 84\quad 36 \quad 9]$ in the Betti table, $q^3\cdot h_{3\times 3}(q)$ corresponds to the entry $1$ in the Betti table in row $6$ and column $3$, and $q^5\cdot h_{3\times 3}(q)$ corresponds to the entry $1$ in the Betti table in row $4$ and column $5$. Indeed, it follows from (\ref{eq:defhrs}) that
\[ h_{3\times 3}(q) = \bb{S}_{3,3,3}\bb{C}^3 \oo \bb{S}_{3,3,3}\bb{C}^3.\]
Note that $\bb{S}_{3,3,3}\bb{C}^3 \oo \bb{S}_{3,3,3}\bb{C}^3$ is a one-dimensional representation of $\GL$, concentrated in degree $9=3+3+3$. It follows from (\ref{eq:defequivBetti}) that $q^3\cdot h_{3\times 3}(q)$ contributes a one-dimensional subspace to $\Tor^S_3(I_{2\times 2},\bb{C})_9$, so it must be the case that
\[\Tor^S_3(I_{2\times 2},\bb{C})_9 \simeq \bb{S}_{3,3,3}\bb{C}^3 \oo \bb{S}_{3,3,3}\bb{C}^3\]
since the Betti number $\b_{3,9}$ is equal to $1$. Similarly, $q^5\cdot h_{3\times 3}(q)$ contributes to $\Tor^S_5(I_{2\times 2},\bb{C})_9$, so
\[\Tor^S_5(I_{2\times 2},\bb{C})_9 \simeq \bb{S}_{3,3,3}\bb{C}^3 \oo \bb{S}_{3,3,3}\bb{C}^3.\]
The remaining entries of the Betti table are accounted for by (for compactness we write $\bb{S}_{\ll}$ instead of $\bb{S}_{\ll}\bb{C}^3$)
\[
\begin{aligned}
h_{2\times 2}(q) &= \bb{S}_{2,2} \oo \bb{S}_{2,2}  \\
&+ (\bb{S}_{3,2} \oo \bb{S}_{2,2,1} + \bb{S}_{2,2,1} \oo \bb{S}_{3,2}) \cdot q \\
&+ (\bb{S}_{3,3} \oo \bb{S}_{2,2,2} + \bb{S}_{3,2,1} \oo \bb{S}_{3,2,1} + \bb{S}_{2,2,2} \oo \bb{S}_{3,3}) \cdot q^2 \\
&+ (\bb{S}_{3,3,1} \oo \bb{S}_{3,2,2} + \bb{S}_{3,2,2} \oo \bb{S}_{3,3,1}) \cdot q^3 \\
&+ (\bb{S}_{3,3,2} \oo \bb{S}_{3,3,2}) \cdot q^4 
\end{aligned}
\]
We conclude that
\[
\begin{aligned}
\Tor^S_0(I_{2\times 2},\bb{C})_4 &\simeq \bb{S}_{2,2} \oo \bb{S}_{2,2} \\
\Tor^S_1(I_{2\times 2},\bb{C})_5 &\simeq \bb{S}_{3,2} \oo \bb{S}_{2,2,1} \oplus \bb{S}_{2,2,1} \oo \bb{S}_{3,2} \\
\Tor^S_2(I_{2\times 2},\bb{C})_6 &\simeq \bb{S}_{3,3} \oo \bb{S}_{2,2,2} \oplus \bb{S}_{3,2,1} \oo \bb{S}_{3,2,1} \oplus \bb{S}_{2,2,2} \oo \bb{S}_{3,3}\\
\Tor^S_3(I_{2\times 2},\bb{C})_7 &\simeq \bb{S}_{3,3,1} \oo \bb{S}_{3,2,2} \oplus \bb{S}_{3,2,2} \oo \bb{S}_{3,3,1} \\
\Tor^S_4(I_{2\times 2},\bb{C})_8 &\simeq \bb{S}_{3,3,2} \oo \bb{S}_{3,3,2}\\
\end{aligned}
\]
To reconcile this with the Betti numbers in (\ref{eq:betti-table}) it suffices to use the following dimension calculations:
\begin{center}
\setlength{\extrarowheight}{2pt}
\tabulinesep=1.2mm
\begin{tabu}{c|c|c|c|c|c|c|c|c|c}
$\ll$ & $(2,2)$ & $(3,2)$ & $(2,2,1)$ & $(3,3)$ & $(2,2,2)$ & $(3,2,1)$ & $(3,2,2)$ & $(3,3,1)$ & $(3,3,2)$  \\
\hline
$\dim(\bb{S}_{\ll})$ & 6 & 15 & 3 & 10 & 1 & 8 & 3 & 6 & 3\\
\end{tabu}
\end{center}
We have for instance
\[ \b_{2,6}(I_{2\times 2}) = \dim(\Tor^S_2(I_{2\times 2},\bb{C})_6) = 10\cdot 1 + 8\cdot 8 + 1 \cdot 10 = 84.\]
\end{example}

Just as the graded $\GL$-representations in (\ref{eq:genfun-loccoh}) were shadows of a higher structure (they were underlying representations of simple $\GL$-equivariant $\D$-modules), the polynomials $h_{r\times s}(q)$ in (\ref{eq:defhrs}) encode the underlying $\GL$-representations of certain simple modules over the \defi{general linear Lie superalgebra} $\mf{gl}(m|n)$. The key to determining (\ref{eq:defequivBetti}) for arbitrary $\GL$-invariant ideals $I$ is perhaps to get a better grasp on the interplay between the representation theory of $\mf{gl}(m|n)$ and the syzygies of the $\GL$-invariant ideals.

\section{Open questions}\label{sec:open}

As alluded to in the text, Theorem~\ref{thm:main-Ext} in conjunction with Theorem~\ref{thm:ExtJzl} gives rise to formulas (which can be quite involved) for the Castelnuovo--Mumford regularity of arbitrary $\GL$-invariant ideals. If we consider high powers (usual, symbolic, saturated) of determinantal ideals then these formulas simplify quite a bit. We show in \cite{raicu-regularity} that for $d\geq n-1$ we have
\[\reg(I_p^d) = \reg((I_p^d)^{sat}) = p\cdot d + \left\lfloor\frac{p-1}{2}\right\rfloor \cdot \left\lceil\frac{p-1}{2}\right\rceil,\mbox{ and }\reg(I_p^{(d)}) = p\cdot d.\]
As far as low powers are concerned, we obtain a closed formula only when $p=2$ (besides the cases when $p=1$ and $p=n$ which are easy):
\[\reg(I_2^d) =  \reg((I_2^d)^{sat}) = \reg(I_2^{(d)}) = d + n - 1 \mbox{ for }d=1,\cdots,n-1.\]

\begin{problem}
 Give a closed formula for the regularity of small powers (usual, symbolic, saturated) of determinantal ideals when $2<p<n$ and $1<d<n-1$.
\end{problem}

The local cohomology modules $H_{I_p}^{\bullet}(S)$ are finite lenght $\D$-modules, and we explained how formula (\ref{eq:genfun-loccoh}) encodes their composition factors in a Jordan--H\"older filtration. It would be interesting to understand better how these composition factors fit together to form $H_{I_p}^{\bullet}(S)$. 

\begin{problem}
 Describe the extension data required to build up the local cohomology modules $H_{I_p}^{\bullet}(S)$ from their composition factors.
\end{problem}

The $\D$-modules $H_{I_p}^{\bullet}(S)$ are not only of finite length, but they are $\GL$-equivariant, regular, and holonomic. We have learnt from private communication with Andr\'as L\H{o}rincz that in the case when $m>n$ the category of such modules is semisimple, and thus Theorem~\ref{thm:loccoh} yields the decomposition of the modules $H_{I_p}^{\bullet}(S)$ as a direct sum of simples. This conclusion however fails for square matrices (when $m=n$), as can be seen for instance by localizing $S$ at the determinant of the generic $n\times n$ matrix. Nevertheless, in this case the category of regular holonomic $\D$-modules has been given a quiver-type description in \cite{braden-grinberg}, so one would have to identify the subcategory of $\GL$-equivariant modules and within that to locate the modules $H_{I_p}^{\bullet}(S)$.

Given the calculation of $\Ext$ modules for $\GL$-invariant ideals, we can determine the regularity and projective dimension of such ideals, so we have a first approximation of the shape of their minimal free resolution. It would be interesting to carry this further and describe the complete Betti tables.

\begin{problem}
 Determine the syzygies $\Tor^S_{\bullet}(I,\bb{C})$ of an arbitrary $\GL$-invariant ideal $I\subset S = \bb{C}[X_{ij}]$.
\end{problem}

Following the ideas of \cite{deconcini-eisenbud-procesi}, the classification Problem~\ref{prob:classification} was solved for symmetric matrices in \cite{abeasis} and for skew-symmetric matrices in \cite{abeasis-delfra}. It is then interesting to study the homological invariants associated to invariant ideals in these cases.

\begin{problem}
 Solve Problem~\ref{prob:invariants} for the natural action of $G=\GL_n(\bb{C})$ on the ring of polynomial functions on $n\times n$ symmetric (resp. skew-symmetric) matrices.
\end{problem}

In the case of skew-symmetric matrices, the problem of computing $\Ext$ modules has been recently resolved by Michael Perlman \cite{perlman}. The local cohomology modules have been computed for both symmetric and skew-symmetric matrices in \cite{raicu-weyman-loccoh}. As far as syzygy modules are concerned there is only little progress, but just as in the case of general $m\times n$ matrices it is expected that the structure of syzygy modules is controlled by the representation theory of certain Lie superalgebras, specifically the periplectic superalgebras \cite{sam}.

Much of the theory of determinantal rings can be developed over fields of arbitrary characteristic, or over more general base rings, as it is done for instance in the monograph \cite{bruns-vetter}. In contrast, the methods employed in proving the results surveyed in this article are heavily dependent on the characteristic of the field being equal to $0$, and in fact the structure of the basic homological invariants can change drastically from characteristic zero to positive characteristic. For instance, since the rings $S/I_p$ are Cohen-Macaulay (see \cites{hochster-eagon,hodge} or \cite[Section~12.C]{bruns-vetter}) it follows from \cite[Prop.~III.4.1]{peskine-szpiro} that in positive characteristic the local cohomology groups $H_{I_p}^{\bullet}(S)$ are non-zero in only one cohomological degree, which is in stark contrast with the conclusion of Theorem~\ref{thm:loccoh}. Nevertheless, it is natural to consider the following.

\begin{problem}
 Solve Problems~\ref{prob:classification} and~\ref{prob:invariants} for matrix spaces over a field of positive characteristic.
\end{problem}

\section*{Acknowledgements}
The author acknowledges the support of the Alfred P. Sloan Foundation, and of the National Science Foundation Grant No.~1600765.

	\begin{bibdiv}
		\begin{biblist}

\bib{abeasis}{article}{
   author={Abeasis, Silvana},
   title={The ${\rm GL}(V)$-invariant ideals in $S(S^{2}V)$},
   language={Italian, with English summary},
   journal={Rend. Mat. (6)},
   volume={13},
   date={1980},
   number={2},
   pages={235--262},
   issn={0034-4427},
   review={\MR{602662}},
}

\bib{abeasis-delfra}{article}{
   author={Abeasis, S.},
   author={Del Fra, A.},
   title={Young diagrams and ideals of Pfaffians},
   journal={Adv. in Math.},
   volume={35},
   date={1980},
   number={2},
   pages={158--178},
   issn={0001-8708},
   review={\MR{560133}},
   doi={10.1016/0001-8708(80)90046-8},
}

\bib{akin-buchsbaum-weyman}{article}{
   author={Akin, Kaan},
   author={Buchsbaum, David A.},
   author={Weyman, Jerzy},
   title={Resolutions of determinantal ideals: the submaximal minors},
   journal={Adv. in Math.},
   volume={39},
   date={1981},
   number={1},
   pages={1--30},
   issn={0001-8708},
   review={\MR{605350}},
   doi={10.1016/0001-8708(81)90055-4},
}

\bib{braden-grinberg}{article}{
   author={Braden, Tom},
   author={Grinberg, Mikhail},
   title={Perverse sheaves on rank stratifications},
   journal={Duke Math. J.},
   volume={96},
   date={1999},
   number={2},
   pages={317--362},
   issn={0012-7094},
   review={\MR{1666554}},
   doi={10.1215/S0012-7094-99-09609-6},
}

\bib{bruns-herzog}{book}{
   author={Bruns, Winfried},
   author={Herzog, J{\"u}rgen},
   title={Cohen-Macaulay rings},
   series={Cambridge Studies in Advanced Mathematics},
   volume={39},
   publisher={Cambridge University Press, Cambridge},
   date={1993},
   pages={xii+403},
   isbn={0-521-41068-1},
   review={\MR{1251956}},
}

\bib{bruns-vetter}{book}{
   author={Bruns, Winfried},
   author={Vetter, Udo},
   title={Determinantal rings},
   series={Lecture Notes in Mathematics},
   volume={1327},
   publisher={Springer-Verlag, Berlin},
   date={1988},
   pages={viii+236},
   isbn={3-540-19468-1},
   review={\MR{953963}},
   doi={10.1007/BFb0080378},
}

\bib{buchsbaum-eisenbud}{article}{
   author={Buchsbaum, David A.},
   author={Eisenbud, David},
   title={Generic free resolutions and a family of generically perfect
   ideals},
   journal={Advances in Math.},
   volume={18},
   date={1975},
   number={3},
   pages={245--301},
   issn={0001-8708},
   review={\MR{0396528}},
   doi={10.1016/0001-8708(75)90046-8},
}

\bib{deconcini-eisenbud-procesi}{article}{
   author={De Concini, Corrado},
   author={Eisenbud, David},
   author={Procesi, Claudio},
   title={Young diagrams and determinantal varieties},
   journal={Invent. Math.},
   volume={56},
   date={1980},
   number={2},
   pages={129--165},
   issn={0020-9910},
   review={\MR{558865}},
   doi={10.1007/BF01392548},
}

\bib{hodge}{book}{
   author={De Concini, Corrado},
   author={Eisenbud, David},
   author={Procesi, Claudio},
   title={Hodge algebras},
   series={Ast\'erisque},
   volume={91},
   note={With a French summary},
   publisher={Soci\'et\'e Math\'ematique de France, Paris},
   date={1982},
   pages={87},
   review={\MR{680936}},
}

\bib{eisenbud-CA}{book}{
   author={Eisenbud, David},
   title={Commutative algebra},
   series={Graduate Texts in Mathematics},
   volume={150},
   note={With a view toward algebraic geometry},
   publisher={Springer-Verlag},
   place={New York},
   date={1995},
   pages={xvi+785},
   isbn={0-387-94268-8},
   isbn={0-387-94269-6},
   review={\MR{1322960 (97a:13001)}},
   doi={10.1007/978-1-4612-5350-1},
}

\bib{eisenbud-syzygies}{book}{
   author={Eisenbud, David},
   title={The geometry of syzygies},
   series={Graduate Texts in Mathematics},
   volume={229},
   note={A second course in commutative algebra and algebraic geometry},
   publisher={Springer-Verlag, New York},
   date={2005},
   pages={xvi+243},
   isbn={0-387-22215-4},
   review={\MR{2103875}},
}

\bib{M2}{article}{
          author = {Grayson, Daniel R.},
          author = {Stillman, Michael E.},
          title = {Macaulay 2, a software system for research
                   in algebraic geometry},
          journal = {Available at \url{http://www.math.uiuc.edu/Macaulay2/}}
        }

\bib{green2}{article}{
   author={Green, Mark L.},
   title={Koszul cohomology and the geometry of projective varieties. II},
   journal={J. Differential Geom.},
   volume={20},
   date={1984},
   number={1},
   pages={279--289},
   issn={0022-040X},
   review={\MR{772134}},
}

\bib{hochster-eagon}{article}{
   author={Hochster, M.},
   author={Eagon, John A.},
   title={Cohen-Macaulay rings, invariant theory, and the generic perfection
   of determinantal loci},
   journal={Amer. J. Math.},
   volume={93},
   date={1971},
   pages={1020--1058},
   issn={0002-9327},
   review={\MR{0302643}},
   doi={10.2307/2373744},
}

\bib{lascoux}{article}{
   author={Lascoux, Alain},
   title={Syzygies des vari\'et\'es d\'eterminantales},
   language={French},
   journal={Adv. in Math.},
   volume={30},
   date={1978},
   number={3},
   pages={202--237},
   issn={0001-8708},
   review={\MR{520233 (80j:14043)}},
   doi={10.1016/0001-8708(78)90037-3},
}

\bib{perlman}{article}{
   author={Perlman, Michael},
   title={Regularity of Pfaffian Thickenings},
   journal = {arXiv},
   number = {1711.02777},
   date={2017},
}

\bib{peskine-szpiro}{article}{
   author={Peskine, C.},
   author={Szpiro, L.},
   title={Dimension projective finie et cohomologie locale. Applications \`a
   la d\'emonstration de conjectures de M. Auslander, H. Bass et A.
   Grothendieck},
   language={French},
   journal={Inst. Hautes \'Etudes Sci. Publ. Math.},
   number={42},
   date={1973},
   pages={47--119},
   issn={0073-8301},
   review={\MR{0374130}},
}

\bib{pragacz-weyman}{article}{
   author={Pragacz, Piotr},
   author={Weyman, Jerzy},
   title={Complexes associated with trace and evaluation. Another approach
   to Lascoux's resolution},
   journal={Adv. in Math.},
   volume={57},
   date={1985},
   number={2},
   pages={163--207},
   issn={0001-8708},
   review={\MR{803010}},
   doi={10.1016/0001-8708(85)90052-0},
}

\bib{raicu-dmods}{article}{
   author={Raicu, Claudiu},
   title={Characters of equivariant $\mathcal{D}$-modules on spaces of matrices},
   journal={Compos. Math.},
   volume={152},
   date={2016},
   number={9},
   pages={1935--1965},
   issn={0010-437X},
   review={\MR{3568944}},
   doi={10.1112/S0010437X16007521},
}

\bib{raicu-regularity}{article}{
   author={Raicu, Claudiu},
   title={Regularity and cohomology of determinantal thickenings},
   journal = {arXiv},
   number = {1611.00415},
   date={2016},
   note={To appear in Proc. Lond. Math. Soc}
}

\bib{raicu-weyman}{article}{
   author={Raicu, Claudiu},
   author={Weyman, Jerzy},
   title={Local cohomology with support in generic determinantal ideals},
   journal={Algebra \& Number Theory},
   volume={8},
   date={2014},
   number={5},
   pages={1231--1257},
   issn={1937-0652},
   review={\MR{3263142}},
   doi={10.2140/ant.2014.8.1231},
}

\bib{raicu-weyman-loccoh}{article}{
   author={Raicu, Claudiu},
   author={Weyman, Jerzy},
   title={Local cohomology with support in ideals of symmetric minors and
   Pfaffians},
   journal={J. Lond. Math. Soc. (2)},
   volume={94},
   date={2016},
   number={3},
   pages={709--725},
   issn={0024-6107},
   review={\MR{3614925}},
   doi={10.1112/jlms/jdw056},
}

\bib{raicu-weyman-syzygies}{article}{
   author={Raicu, Claudiu},
   author={Weyman, Jerzy},
   title={The syzygies of some thickenings of determinantal varieties},
   journal={Proc. Amer. Math. Soc.},
   volume={145},
   date={2017},
   number={1},
   pages={49--59},
   issn={0002-9939},
   review={\MR{3565359}},
   doi={10.1090/proc/13197},
}

\bib{raicu-weyman-witt}{article}{
   author={Raicu, Claudiu},
   author={Weyman, Jerzy},
   author={Witt, Emily E.},
   title={Local cohomology with support in ideals of maximal minors and
   sub-maximal Pfaffians},
   journal={Adv. Math.},
   volume={250},
   date={2014},
   pages={596--610},
   issn={0001-8708},
   review={\MR{3122178}},
   doi={10.1016/j.aim.2013.10.005},
}

\bib{sam}{article}{
   author={Sam, Steven V},
   title={Derived supersymmetries of determinantal varieties},
   journal={J. Commut. Algebra},
   volume={6},
   date={2014},
   number={2},
   pages={261--286},
   issn={1939-0807},
   review={\MR{3249839}},
   doi={10.1216/JCA-2014-6-2-261},
}

\bib{weyman}{book}{
   author={Weyman, Jerzy},
   title={Cohomology of vector bundles and syzygies},
   series={Cambridge Tracts in Mathematics},
   volume={149},
   publisher={Cambridge University Press, Cambridge},
   date={2003},
   pages={xiv+371},
   isbn={0-521-62197-6},
   review={\MR{1988690}},
   doi={10.1017/CBO9780511546556},
}

		\end{biblist}
	\end{bibdiv}

\end{document}